\newtheorem{theorem}{Theorem}[section]
\newtheorem{proposition}{Proposition}[section]
\newtheorem{lemma}[theorem]{Lemma}
\theoremstyle{definition}
\newtheorem{definition}[theorem]{Definition}
\theoremstyle{remark}
\numberwithin{equation}{section}
\def\ve{\varepsilon}
\def\R{{\mathbb R}}
\def\C{{\mathbb C}}
\def\N{{\mathbb N}}
\def\Z{{\mathbb Z}}
\def\E{{ \mathcal E}}
\def\su{_{\scriptscriptstyle U}}
\def\pr{^{\scriptscriptstyle \R}}
\def\po{^{\scriptscriptstyle O}}
\def\pu{^{\scriptscriptstyle U}}
\def\sr{_{\scriptscriptstyle \R}}
\def\sc{_{\scriptscriptstyle \C}}
\def\crr{^{\scriptscriptstyle {\it CR}}}
\def\crt{^{\scriptscriptstyle {\it CRT}}}
\def\CRT{\mathcal{CRT}}
\def\CR{\mathcal{CR}}
\newcommand{\smv}[2]
	{ \left( \begin{smallmatrix} {#1}  \\ {#2} \end{smallmatrix} \right)  }
\def\id{\text {id} \,}
\def\coker{\text {coker} \,}
\begin{document}

\title[Exotic Cuntz Algebras]{The Stable Exotic 
Cuntz Algebras are Higher-Rank Graph Algebras} 
\author{Jeffrey L. Boersema}
\address{Seattle University}
\email{boersema@seattleu.edu}
\author{Sarah L. Browne}
\address{University of Kansas}
\email{slbrowne@ku.edu}
\author{Elizabeth Gillaspy}
\address{University of Montana}
\email{elizabeth.gillaspy@mso.umt.edu}


\subjclass[2020]{46L80}

\date{\today}

\dedicatory{}

\commby{}

\begin{abstract}
For each odd integer $n \geq 3$, we construct a rank-3 graph  $\Lambda_n$ with involution $\gamma_n$ whose real $C^*$-algebra $C^*\sr(\Lambda_n, \gamma_n)$ is stably isomorphic to the exotic Cuntz algebra $\mathcal E_n$.  This construction is optimal, as we prove that a rank-2 graph with involution $(\Lambda,\gamma)$ can never satisfy $C^*\sr(\Lambda, \gamma)\sim_{ME} \mathcal E_n$, and 
the first author 
reached the same conclusion for rank-1 graphs (directed graphs)  in \cite[Corollary 4.3]{boersema-MJM}.  Our construction relies on a rank-1 graph with involution $(\Lambda, \gamma)$ whose real $C^*$-algebra $C^*\sr(\Lambda, \gamma)$ is stably isomorphic to the suspension $ S \R$.  In the Appendix, we show that the $i$-fold suspension $S^i \R$ is stably isomorphic to a graph algebra iff $-2 \leq i \leq 1$.
\end{abstract}

\maketitle


\bibliographystyle{amsplain}

\section{Introduction}
The complex $C^*$-algebras now known as the {Cuntz algebras} $\mathcal O_n$ were introduced in \cite{cuntz} as the first concrete examples of separable, simple, purely infinite $C^*$-algebras.  The Cuntz algebras quickly became central players in $C^*$-algebra theory, and have also been profitably  employed in a broad range of applications, from wavelets \cite{bratteli-jorgensen-memoirs} to duality for compact groups \cite{doplicher-roberts-cuntz-duality} to mathematical physics \cite{abe-kawamura}.

For every odd integer $n \geq 3$, 
there are two real $C^*$-algebras whose complexification is $\mathcal O_n$: 
the real Cuntz algebra $\mathcal O_n\pr$ and the exotic Cuntz algebra $\mathcal E_n$. 
While the existence of $\mathcal E_n$ follows from the classification of simple purely infinite real $C\sp*$-algebras  \cite{boersema-JFA, brs}, the non-constructive nature of the existence portion of this classification theorem \cite[Theorem~1]{boersema-JFA} means that we know very little about  $\mathcal{E}_n$ beyond its $K$-theory.  In particular, until now there has been no 
description of $\mathcal E_n$ in terms of familiar $C^*$-algebraic objects.  

In this paper, we give an explicit realization of the stabilized exotic Cuntz algebras $\mathcal K\sr \otimes\sr \mathcal E_n$ as higher-rank graph algebras associated to rank-3 graphs with involution.  Given the extensive literature on the properties of higher-rank graph $C^*$-algebras, we anticipate that this concrete description will facilitate an improved understanding of these elusive algebras.

Higher-rank graphs, or $k$-graphs, are a $k$-dimensional generalization of directed graphs which were introduced by Kumjian and Pask in \cite{kp}. Directed graphs are a key tool in $C^*$-algebra theory, because many  properties of their associated (complex) $C^*$-algebras, such as their $K$-theory \cite{raeburn-szyman} and  ideal structure \cite{bhrs,hong-szyman}, are visible from the graph. While the structure of $k$-graph $C^*$-algebras is more intricate than that of graph $C^*$-algebras, $k$-graph $C^*$-algebras also encompass a broader range of examples.  Indeed \cite{ruiz-sims-sorensen}, every complex UCT Kirchberg algebra is a direct limit of 2-graph $C^*$-algebras.
The real $C^*$-algebra $C^*\sr(\Lambda, \gamma)$ of a higher-rank graph with involution $(\Lambda, \gamma)$ was recently introduced by 
the first and third authors
 in \cite{boersema-gillaspy}.  In that paper, the authors also generalized the work of \cite{evans} and \cite{boersema-MJM} to describe a spectral sequence  which converges to the $\mathcal{CR}$ $K$-theory of these real $C^*$-algebras.

The main result (Theorem \ref{rank3existance}) of the present paper, that the exotic Cuntz algebra is stably isomorphic to the $C^*$-algebra of a 3-graph with involution,   is the best possible in terms of the rank of $\Lambda$. 
In \cite{boersema-MJM}, 
the first author 
made an extensive analysis of the $K$-theory of the real $C^*$-algebra $ C \sp * \sr(\Lambda, \gamma)$ of a rank-1 graph  (directed graph) with involution.\footnote{This class of $C^*$-algebras includes the real $C^*$-algebras $C^*\sr (\Lambda)$ of a directed graph, introduced in \cite{boersema-RMJ}, as $C^*\sr(\Lambda) \cong C^*\sr(\Lambda, \gamma_{\text{triv}})$.}   In particular, \cite[Corollary 4.3]{boersema-MJM}  establishes that the exotic Cuntz algebra cannot be (stably) isomorphic 
 to the  real $C \sp *$-algebra  $ C \sp * \sr(\Lambda)$ of a directed graph $\Lambda$, or to the real $C^*$-algebra $ C \sp * \sr(\Lambda, \gamma)$ of a graph  with involution, since $KO_7(\mathcal E_n) = \Z_2$ but $KO_7(C^*\sr(\Lambda,\gamma))$ is always torsion-free. Theorem \ref{thm:no-2-graph} below uses the $K$-theory spectral sequence for real higher-rank graph $C^*$-algebras (\cite[Section 3]{boersema-gillaspy}) to show that $\mathcal E_n \not \sim_{ME} C^*\sr(\Lambda, \gamma))$ for any rank-2 graph with involution $(\Lambda, \gamma)$. However, we construct in Theorem \ref{rank3existance} a family of rank-3 graphs with involution $\{(\Lambda_n, \gamma_n)\}_{n\geq 1}$ such that $C^*\sr(\Lambda_n, \gamma_n) \cong \mathcal E_{2n+1} \otimes\sr \mathcal K\sr$.
 
 To be precise, $\Lambda_n$ 
is a product graph, $\Lambda_n = E_n \times \Lambda \times \Lambda$.
The graph $E_n$ was introduced in 
\cite[Example 6.2]{boersema-MJM} and admits an involution $\varepsilon_n$ such that $C^*\sr(E_n, \ve_n) \sim_{ME} S^6 \mathcal E_{2n+1}$.  We describe the graph $\Lambda$ and involution $\gamma$ in Proposition \ref{suspension}, and show that $C^*\sr(\Lambda, \gamma) $ is a real Kirchberg algebra which is $KK$-equivalent to the suspension algebra $S \R \cong C_0((0,1), \R)$.  The K\"unneth Theorem for real $C^*$-algebras \cite{boersema2002} and the classification of real Kirchberg algebras \cite{brs} imply that $C^*\sr(\Lambda_n, \gamma_n) \cong \mathcal K\sr \otimes\sr \mathcal E_{2n+1}$.

Prompted by the graph with involution of Proposition \ref{suspension}, we consider in Section \ref{sec:appendix} the question of which suspensions $S^i \R$ are $KK$-equivalent to the real $C^*$-algebra of a graph with involution. For $-2 \leq i \leq 1$ we exhibit an example of a graph with involution $(\Lambda, \gamma)$ such that $C^*\sr(\Lambda, \gamma)$ is $KK$-equivalent to $S^i\R$, and we show in Proposition \ref{prop:last} that $S^i\R\not\sim_{KK} C^*\sr(\Lambda, \gamma)$ if $2 \leq i \leq 5$.  (However, we can realize these suspensions as 2-graph or 3-graph algebras, by taking products of the graphs which do realize suspensions of $\R$.)

Many key questions remain open for further investigation about the class of real $C \sp *$-algebras that can be obtained using higher-rank graphs. For example, it is still unknown whether or not $\mathcal{E}_n$ itself can be realized as a rank-$k$ graph-with-involution algebra. Similarly, it remains unknown which real Kirchberg algebras can be realized by higher-rank graphs with involution (as opposed to inductive limits of such objects); we would particularly like to find a $K$-theoretic characterization of such algebras.

{\bf Acknowledgments:} E.G.
was partially supported by NSF grant 1800749.

\section{Preliminaries}

\subsection{Higher-rank graphs}
\label{sec:k-graph}
\begin{definition} \cite[Definition 1.1]{kp}
A {\it higher-rank graph} of rank $k$, or a {\it $k$-graph}, is a countable small category $\Lambda$ equipped with a degree functor $d \colon \Lambda \rightarrow \mathbb{N}^k$ such that, if a morphism $\lambda \in \Lambda$ satisfies $d(\lambda)=m+n$, then there exist unique morphisms $\mu, \nu \in \Lambda$ such that $\lambda=\mu\nu$, $d(\mu)=m$ and $d(\nu)=n$. 
\end{definition}
Write $e_i$ for the standard $i$th basis vector of $\N^k$.
The morphisms of degree $e_i$ can be advantageously viewed as the ``edges of color $i$'' in $\Lambda$.  In this perspective, if $e$ is an edge of color $i$ and $f$ is an edge of color $j$, their composition $ef \in \Lambda$ satisfies 
\[ d(ef) = e_i + e_j = e_j + e_i,\]
so we must be able to rewrite $ef = f'e'$ for some morphisms $e', f'\in \Lambda$ with $d(f') = e_j$ and $d(e') = e_i$.  

Indeed, by \cite[Theorems 4.4 and 4.5]{hazle-raeburn-sims-webster}, a $k$-graph can be equivalently thought of as arising from a directed graph $G$, with $k$ colors of edges and with a factorization rule on multicolored paths. 
That is, given any two colors (``red'' and ``blue'') and any two vertices $v, w$ in $G$, the factorization rule identifies each red-blue path $ef$ from $v$  to $w$ with an equivalent blue-red path $f'e'$  from $v$ to $w$.  

We would like  the quotient of the space $G^*$ of directed paths in $G$ by the equivalence relation $\sim$ generated by the factorization rule to be a $k$-graph.  For this to occur, 
the factorization rule must also satisfy certain consistency conditions which ensure that, for each path in $G^*$, its equivalence class  under $\sim$ corresponds to a $k$-dimensional hyper-rectangle; see \cite[Theorem 2.3]{EFGGGP} for more details.  As our work in this paper does not depend on these consistency conditions, we will not reproduce them here.  That said, we remark that in a rank-1 graph, the factorization rule  is nonexistent, and so a 1-graph is precisely the space of paths of a directed graph.

Let $\Lambda$ be a $k$-graph.  Given $n\in \N^k$ and objects $v,w \in \Lambda$, we write 
\begin{equation}
\Lambda^n = \{ \lambda \in \Lambda: d(\lambda) = n\}.
\label{eq:kgraph-notation-degree}
\end{equation}  
By the factorization rule, for every $\lambda \in \Lambda$, there are unique $v, w \in \Lambda^0$ with $v \lambda = \lambda w = v\lambda w = \lambda$.  That is, we can identify $\Lambda^0$ with the objects of $\Lambda$. If $\lambda = v \lambda w$, we write $v = r(\lambda)$ and $w = s(\lambda)$.  Thus, expanding on Equation \eqref{eq:kgraph-notation-degree}, we have 
\[\begin{aligned}
 v\Lambda^n &= \{ \lambda \in \Lambda: r(\lambda) = v \text{ and } d(\lambda) = n\} \text{ and }
  \Lambda^n w &= \{ \lambda \in \Lambda: s(\lambda) = w \text{ and } d(\lambda) = n\},
\end{aligned} \]
as well as the obvious variations such as $v\Lambda^n w$.

A $k$-graph $\Lambda$ has $k$ {\it adjacency matrices} $M_i \in M_{\Lambda^0}(\N)$, which are given by 
\begin{equation}
\label{eq:adjacency-matrices}
M_i(v,w) = \# v\Lambda^{e_i} w.
\end{equation}

 In the graphical picture, $\lambda \in \Lambda^{(n_1, \ldots, n_k)}$ means that $\lambda$ represents the $\sim$-equivalence class of a path with $n_i$ edges of color $i$, for each $1\leq i \leq k$.  That is, $\Lambda^0$ consists of the length-0 paths, ie, the vertices. Then $M_i(v,w)$ is the number of edges of color $i$ from vertex $w$ to vertex $v$.

If $\Lambda_1$ is a $k_1$-graph and $\Lambda_2$ is a $k_2$-graph, then \cite[Proposition 1.8]{kp}   their (Cartesian) product $\Lambda_1 \times \Lambda_2$ is a $(k_1 + k_2)$-graph; the degree functor is given by $d(\lambda_1, \lambda_2) = (d(\lambda_1), d(\lambda_2)$.  We have $(\Lambda_1 \times \Lambda_2)^0 = \Lambda_1^0 \times \Lambda_2^0$ and $s(\lambda_1 \times \lambda_2) = (s(\lambda_1), s(\lambda_2))$.

In this paper we will focus on $k$-graphs which are {\it row-finite} and {\it source-free} (or {\it have no sources}).
We say a $k$-graph  $\Lambda$ is row-finite if $|v\Lambda^n|<\infty$ for all $n \in \mathbb{N}^k$ and $v \in \Lambda^0$. 
The $k$-graph has no sources if $v\Lambda^n \not= \emptyset$ for all $v,n$.
It is straightforward to check that if $\Lambda_1, \Lambda_2$ are row-finite and source-free, then so is $\Lambda_1 \times \Lambda_2$.

For a row-finite source-free $k$-graph $\Lambda$, its (complex) $C^*$-algebra is the universal $C^*$-algebra generated by a Cuntz--Krieger $\Lambda$-family.  

\begin{definition}\cite[Definition 1.5]{kp}
Given a row-finite source-free $k$-graph $\Lambda$, a {\it Cuntz--Krieger $\Lambda$-family} is a collection $\{t_\lambda\}_{\lambda \in \Lambda}$ of partial isometries  in a $C\sp*$-algebra $A$ which satisfy the following conditions:
\begin{enumerate}
\item[(CK1)] For each $v \in \Lambda^0$, $t_v$ is a projection, and $t_v t_w = \delta_{v,w} t_v$.
\item[(CK2)] For each $\lambda \in \Lambda, \ t_\lambda^* t_\lambda = t_{s(\lambda)}$.
\item[(CK3)] For each $\lambda, \mu \in \Lambda, \ t_\lambda t_\mu = t_{\lambda \mu}$.
\item[(CK4)] For each $v \in \Lambda^0$ and each $n \in \N^k$, 
$\displaystyle t_v = \sum_{\lambda \in v\Lambda^n} t_\lambda t_\lambda^*.$
\end{enumerate}
We define $C\sp*(\Lambda)$ to be the universal (complex) $C\sp*$-algebra generated by a Cuntz--Krieger family, in the sense that for any Cuntz--Krieger $\Lambda$-family $\{t_\lambda\}_{\lambda \in \Lambda}$, there is a surjective $*$-homomorphism $C\sp*(\Lambda) \to C\sp*(\{ t_\lambda\}_\lambda)$.  
\end{definition}

We write $\{s_\lambda\}_{\lambda \in \Lambda}$ for the generators of $C\sp*(\Lambda)$.
The Cuntz--Krieger relations imply that $C\sp*(\Lambda) = \overline{\text{span}} \{ s_\lambda s_\mu^*: s(\lambda) = s(\mu)\}.$
By 
\cite[Corollary 3.5(iv)]{kp},  the map $s_{(\lambda_1, \lambda_2)} $ to $s_{\lambda_1} \otimes s_{\lambda_2}$ gives an isomorphism $C^*(\Lambda_1 \times \Lambda_2) \cong C^*(\Lambda_1) \otimes C^*(\Lambda_2)$,.

We will use one more ingredient -- an involution -- to construct the real $C^*$-algebras associated to higher-rank graphs.

\begin{definition}
An {\it involution} $\gamma$ on a $k$-graph $\Lambda$ is a degree-preserving functor $\gamma \colon \Lambda \rightarrow \Lambda$ which satisfies $\gamma \circ \gamma = \id_{\Lambda}$.
\end{definition}

By \cite[Definition 2.4]{boersema-gillaspy}, the real $C^*$-algebra associated to a $k$-graph $\Lambda$ and an involution $\gamma: \Lambda \to \Lambda$ is 
\begin{equation}
\label{eq:real-C*-alg}
C^*\sr(\Lambda, \gamma) =  \{a \in C \sp *(\Lambda) \mid \widetilde \gamma(a) = a^* \} ,
\end{equation}
where $\widetilde \gamma$ is the antimultiplicative $C^*$-involution uniquely determined by $\widetilde \gamma(s_\lambda) = s^*_{ \gamma(\lambda)}.$
For any involution $\gamma$ on $\Lambda$, $C^*\sr (\Lambda, \gamma)$ is a real form of $C^*(\Lambda)$: that is, $C^*(\Lambda) = \C \otimes\sr C^*\sr(\Lambda, \gamma)$.

\subsection{$\mathcal{CRT}$ $K$-theory}
\label{sec:CRT}
In our work, we will use the full united $K$-theory $K\crt(A)$ (introduced in \cite{boersema2002}) as well as the abbreviated variation $K\crr(A)$ which contains just the real and complex parts. Theorem~10.2 of \cite{brs} shows that the category of real purely infinite simple $C \sp *$-algebras, whose complexifications are simple and in the UCT class, is classified up to isomorphism by either of these invariants. We tend to use $K\crr(A)$ since it is simpler and usually sufficient, but we will also need to use $K\crt(A)$ on occasion since that is the context in which we have the K\" unneth formula.
Specifically, recall that for a real $C \sp *$-algebra $A$,
\begin{align*}
K\crr(A) &= \{ KO_* (A), KU_*(A)\} \quad  \text{and} \quad K\crt(A)  = \{ KO_*(A), KU_*(A), KT_*(A) \} \; , 
\end{align*}
where $KO_*(A)$ is the standard 8-periodic real $K$-theory for a real $C \sp *$-algebra and $KU_*(A) = K_*(\C \otimes\sc A)$ is the 2-periodic $K$-theory of the complexification of $A$. Meanwhile $KT_*(A)$ is the 4-periodic self-conjugate $K$-theory. These invariants also include the additional $\mathcal{CR}$ and $\mathcal{CRT}$-module structure. In particular for $K\crr(A)$ there are natural transformations 
\begin{align*}
r_i &\colon KU_i(A) \rightarrow KO_i(A) && \text{induced by the standard inclusion }  \mathbb{C} \rightarrow M_2(\mathbb{R})  \\
c_i & \colon KO_i(A) \rightarrow KU_i(A) && \text{induced by the standard inclusion }  \mathbb{R} \rightarrow \mathbb{C} \\
\psi_i &\colon KU_i(A) \rightarrow KU_i(A) &&  \text{induced by conjugation }  \mathbb{C} \rightarrow \mathbb{C}  \\
\eta_i &\colon KO_i(A) \rightarrow KO_{i+1}(A) &&  \text{induced by multiplication by $\eta \in KO_1(\R) = \Z_2$} \\
\end{align*}

The additional structure tends to aid in the computations of $KO_*(A)$ because the natural transformations satisfy the relations
\begin{align*}
&rc=2 && cr=1+\psi && 2\eta =0 \\
&r\psi=r && \psi^2=\id && \eta^3=0 \\
&\psi c=c && \psi\beta\su =-\beta\su\psi && \xi = r\beta\su^2c \\
\end{align*}
and they fit into a long exact sequence
\begin{equation}
\cdots \xrightarrow{r\beta\su^{-1}} KO_i(A) \xrightarrow{\eta} KO_{i+1}(A) \xrightarrow{c} 
	KU_{i+1}(A) \xrightarrow{r\beta\su^{-1}} KO_{i-1}(A) \xrightarrow{\eta} \cdots 
	\label{eq:LES}
	\end{equation}

These two invariants $K\crr(A)$ and $K\crt(A)$ contain the same essential information by results of \cite{hewitt} (also summarized in \cite[Proposition~2.5]{brs}). 

\subsection{$K$-theory for higher-rank graphs}
\label{sec:k-graph-K-thy}
For the real $C \sp *$-algebra $C \sp * \sr(\Lambda, \gamma)$ of a higher-rank graph with involution, \cite[Theorem 3.13]{boersema-gillaspy} establishes the existence of a spectral sequence $\{E^r, d^r \}$ of $\CR$-modules that converges to $K\crr (C \sp * \sr(\Lambda, \gamma))$. The complex part of this spectral sequence 
$(E^r_{p,q})\pu$ coincides with the Evans spectral sequence \cite{evans} and converges to 
$KU_*(C \sp * \sr (\Lambda, \gamma)) = K_*( C \sp *(\Lambda))$. 
The real part of this spectral sequence 
$(E^r_{p,q})\po$ converges to 
$KO_*(C \sp * \sr (\Lambda, \gamma))$.

The $E^2$ page of the spectral sequence arises from the homology of a certain chain complex $\mathcal{C}$ based on the combinatorial information of $\Lambda$ and $\gamma$. We will use the spectral sequence only in the rank-1 and rank-2 cases, where these chain complexes have the following straightforward descriptions (cf.~\cite[Theorems 3.17 and  3.18]{boersema-gillaspy}).
Fix a partition of the vertices $\Lambda^0 = \Lambda^0_f \sqcup \Lambda^0_g \sqcup \Lambda^0_h$ where $\Lambda^0_f$ is the set of fixed vertices and 
$\gamma(\Lambda^0_g) = \Lambda^0_h$. 
Then we set $\mathcal{A}=K^{\crr}(\mathbb{R})^{\Lambda^0_f} \oplus K^{\crr}(\mathbb{C})^{\Lambda^0_g} $. 

For a rank-1 graph with involution the chain complex $\mathcal{C}$ is given by
\[ 0 \rightarrow \mathcal{A} \xrightarrow{\partial_1} \mathcal{A} \rightarrow 0, \]
where $\partial_1 = \rho^1$. 
For a rank-2 graph with involution the chain complex $\mathcal{C}$ is given by
\[ 0 \rightarrow \mathcal{A} \xrightarrow{\partial_2} \mathcal{A}^2 \xrightarrow{\partial_1} \mathcal{A} \rightarrow 0, \]
where
$\partial_1 = \begin{pmatrix} \rho^1 & \rho^2 \end{pmatrix}$ and $
\partial_2 = \smv{- \rho^2}{\rho^1}$.
Here the maps $\rho^i$ are entirely determined by the adjacency structure of $\Lambda$.
For $1\leq i \leq k$, the complex part $(\rho^i)_0\pu \colon \Z^{\Lambda^0} \rightarrow \Z^{\Lambda^0}$ is represented by the matrix $B_i = I - M^t_i$, 
where $M_i$ is the adjacency matrix of the graph $\Lambda$ for the  edges of degree $e_i$, and $(\rho^i)_1\pu = 0$ for all $i$. 
The real parts of this map $(\rho^i)_j\po$, for $0 \leq j \leq 7$, can similarly be determined for each $i$ by some variations of $B_i$ as shown in Table~3 of \cite{boersema-gillaspy} and Theorem~4.4 of \cite{boersema-MJM}.
%

\vspace{0mm}
{For reference, the groups of $K\crr(\R)$ and $K\crr(\C)$ are shown below. The natural transformations $\eta$, $c$, $r$, and $\psi$ that are part of the structure of united $K$-theory are uniquely determined from these groups and the long exact sequence \eqref{eq:LES}; they are also shown in Tables~1 and 2 of \cite{boersema-gillaspy}.
In particular we note that for $KO_*(\R)$, the map $\eta_i$ is non-trivial exactly for $i = 0,1$. 

\[\begin{array}{|c|c|c|c|c|c|c|c|c|}  
\hline  \hline  
  & \makebox[.8cm][c]{0} & \makebox[.8cm][c]{1} & 
\makebox[.8cm][c]{2} & \makebox[.8cm][c]{3} 
& \makebox[.8cm][c]{4} & \makebox[.8cm][c]{5} 
& \makebox[.8cm][c]{6} & \makebox[.8cm][c]{7} 
 \\
\hline  \hline 
KO_* (\R) & 
{\Z} & {\Z_2} &
{\Z_2} & {0} & 
{\Z} &{0} &{0} & {0}      \\
\hline  
KU_* (\R)  &
\Z & 0 & 
\Z & 0 & 
\Z & 0 & 
\Z & 0         \\
\hline \hline
\end{array} \]
\[\begin{array}{|c|c|c|c|c|c|c|c|c|}  
\hline  \hline  
  & \makebox[.8cm][c]{0} & \makebox[.8cm][c]{1} & 
\makebox[.8cm][c]{2} & \makebox[.8cm][c]{3} 
& \makebox[.8cm][c]{4} & \makebox[.8cm][c]{5} 
& \makebox[.8cm][c]{6} & \makebox[.8cm][c]{7} 
 \\
\hline  \hline 
KO_* ( \C) &
\Z & 0 & 
\Z & 0 & 
\Z & 0 & 
\Z & 0         \\
\hline  
KU_* ( \C )  &
\Z^2 & 0 & 
\Z^2 & 0 & 
\Z^2 & 0 & 
\Z^2 & 0         \\
\hline \hline
\end{array} \]
}

\section{Non-Existence of a Rank-2 Graph with Involution} \label{lowerbound}

\begin{theorem}
\label{thm:no-2-graph}
Let $n \geq 3$ be odd. There does not exist a row-finite, source-free rank-2 graph with involution $(\Lambda, \gamma)$ such that 
$K\crr( C \sp * \sr( \Lambda, \gamma)) \cong K\crr( \mathcal{E}_n )$.
\end{theorem}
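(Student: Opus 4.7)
The plan is to apply the spectral sequence of \cite[Theorem 3.13]{boersema-gillaspy} to the putative rank-2 graph with involution and exhibit an incompatibility in the $\mathcal{CR}$-module structure, specifically in the $\eta$-operation between $KO_6$ and $KO_7$.

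First I would observe, using the tables from Section~\ref{sec:k-graph-K-thy}, that $KO_q(\R) = 0$ for $q \in \{3, 5, 7\}$ and that $KO_q(\C) \cong KU_q = 0$ for every odd $q$.  Consequently the $\mathcal{CR}$-module $\mathcal{A} = K\crr(\R)^{\Lambda^0_f} \oplus K\crr(\C)^{\Lambda^0_g}$ satisfies $\mathcal{A}\po_q = 0$ for $q \in \{3,5,7\}$.  The real part of the $E^2$-page (and hence the $E^\infty$-page) of the rank-2 spectral sequence therefore vanishes along these three rows.  In particular, the only potentially nonzero contribution to $KO_7(C^*\sr(\Lambda, \gamma))$ is $E^\infty_{1,6}\po$, since $E^\infty_{0,7}\po = 0 = E^\infty_{2,5}\po$.

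Next I would analyze the action of $\eta \colon KO_*(-) \to KO_{*+1}(-)$ on the spectral sequence.  Since $c(\eta) = 0 \in KU_1(\R)$, the operation $\eta$ is trivial on $KO_*(\C) \cong KU_*$.  Moreover, direct inspection of the table of $KO_*(\R)$ shows that $\eta \colon KO_q(\R) \to KO_{q+1}(\R)$ vanishes for $q \geq 2$.  Hence $\eta$ acts as zero on $\mathcal{A}\po_q \to \mathcal{A}\po_{q+1}$ for all $q \geq 2$, and in particular the induced $\eta$-action on the $E^\infty$-page vanishes on every map from row $6$ to row $7$.  Using the $\mathcal{CR}$-module structure on the spectral sequence, I would then argue that this graded vanishing lifts to the vanishing of $\eta \colon KO_6(C^*\sr(\Lambda, \gamma)) \to KO_7(C^*\sr(\Lambda, \gamma))$ itself.

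For the contradiction, the long exact sequence \eqref{eq:LES} applied to $\mathcal{E}_n$ gives $KO_6(\mathcal{E}_n) \xrightarrow{\eta} KO_7(\mathcal{E}_n) \xrightarrow{c} KU_7(\mathcal{E}_n) = 0$, so $\eta$ is surjective onto $KO_7(\mathcal{E}_n) = \Z_2 \neq 0$.  Thus an isomorphism $K\crr(C^*\sr(\Lambda, \gamma)) \cong K\crr(\mathcal{E}_n)$ would transport a trivial $\eta$-map to a nontrivial one, which is impossible.

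The hardest step, and the main obstacle, is the verification that the graded vanishing of $\eta$ on the $E^\infty$-page entails the vanishing of $\eta$ on the actual $KO_6 \to KO_7$ map, that is, that $\eta$ cannot increase filtration in the spectral sequence.  I expect this to follow from the $KO_*(\R)$-linearity of the construction in \cite{boersema-gillaspy}, but it requires careful bookkeeping of the multiplicative structure through the filtered resolution underlying the spectral sequence.
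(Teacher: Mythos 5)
Your overall strategy is the one the paper uses: run the real part of the rank-2 spectral sequence, kill the rows $q=3,5,7$ because $\mathcal A_q=0$ there, and derive a contradiction from $\eta_6\colon KO_6\to KO_7$, which must be an isomorphism $\Z_2\to\Z_2$ for $\E_n$ by the long exact sequence \eqref{eq:LES}. Those parts are fine. (A small imprecision: the associated-graded components of $\eta_6$ are $E^\infty_{p,6-p}\to E^\infty_{p,7-p}$ for $p=0,1,2$, so only one of them is a ``map from row $6$ to row $7$''; the others go from row $5$ to row $6$ and from row $4$ to row $5$. Your computation that $\eta$ vanishes on $\mathcal A_q$ for $q\ge 2$ is correct but not actually needed, since row $7$ is already zero.)

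The gap is in the step you flag as the obstacle, and it is genuine, for two intertwined reasons. First, the principle you propose --- that vanishing of the induced $\eta$ on every associated-graded piece of the $E^\infty$-page forces $\eta_6=0$ on $KO_6(C\sp *\sr(\Lambda,\gamma))$ itself --- is false in general: a filtration-preserving map can induce zero on the associated graded while being nonzero on the filtered group (it merely has to strictly drop filtration). What actually works is the bare fact that $\eta$ preserves the filtration: since $F_0KO_7=E^\infty_{0,7}=0$, one gets $\eta(F_0KO_6)=0$, and this kills $\eta_6$ \emph{provided} $KO_6=F_0KO_6$, i.e.\ provided $E^\infty_{1,5}=E^\infty_{2,4}=0$. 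Second, you have $E^\infty_{1,5}=0$ from the vanishing of row $5$, but you never address $E^\infty_{2,4}$, and nothing in your argument rules it out. If $E^\infty_{2,4}\ne 0$, then $KO_6$ has a nontrivial top filtration quotient, the constraint $\eta(F_2KO_6)\subseteq F_2KO_7=KO_7$ is vacuous, and $\eta_6$ could perfectly well be an isomorphism; the vanishing of the component $E^\infty_{2,4}\to E^\infty_{2,5}=0$ gives no information, for exactly the reason in the first point. The missing ingredient, which the paper supplies, is that $E^2_{2,q}=\ker\partial_2$ is a subgroup of the free abelian group $\mathcal A_q$ for $q=0,4,6$, hence free, hence so is $E^\infty_{2,q}\le E^2_{2,q}$; but $E^\infty_{2,q}$ is also a subquotient (indeed a quotient of a subgroup) of the finite group $KO_q(\E_n)$, so it must vanish. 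With $E^\infty_{2,4}=0$ in hand, $KO_6=E^\infty_{0,6}$ sits entirely in filtration degree $0$ and the filtration-preservation argument closes the proof.
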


\begin{proof}
Suppose that $(\Lambda, \gamma)$ is a row-finite, source-free rank-2 graph with involution  
such that
$K\crr( C \sp * \sr( \Lambda, \gamma)) \cong K\crr( \mathcal{E}_n )$. 
The groups of $K\crr(\mathcal{E}_n )$ are as follows:
\[\begin{array}{|c|c|c|c|c|c|c|c|c|}  
\hline  \hline  
  & \makebox[.8cm][c]{0} & \makebox[.8cm][c]{1} & 
\makebox[.8cm][c]{2} & \makebox[.8cm][c]{3} 
& \makebox[.8cm][c]{4} & \makebox[.8cm][c]{5} 
& \makebox[.8cm][c]{6} & \makebox[.8cm][c]{7} 
 \\
\hline  \hline 
KO_* ( \mathcal{E}_n ) & {\Z_{2(n-1)}} & {\Z_2} &
{\Z_2} & {0} & 
{\Z_{(n-1)/2}} &{0} &{\Z_2} & {\Z_2}      \\
\hline  
KU_* ( \mathcal{E}_n)  &
\Z_{n-1} & 0 & 
\Z_{n-1} & 0 & 
\Z_{n-1} & 0 & 
\Z_{n-1}  & 0         \\
\hline \hline
\end{array} \]
Note that since $KU_7( \mathcal{E}_n ) = 0$ and since $\text{im}\, \eta_6 = \ker c_7$ from the long exact sequence \eqref{eq:LES} relating $KO_*(A)$ and $KU_*(A)$, we see immediately that $\eta_6 \colon \Z_2 \rightarrow \Z_2$ must be an isomorphism.

Now, we consider only the real part of the spectral sequence from  \cite[Theorem~3.18]{boersema-gillaspy}.
This is a spectral sequence converging to $KO_*( C \sp * \sr( \Lambda, \gamma))$, where the $E^2$-page consists of the homology of the chain complex
\begin{equation}
0 \rightarrow \mathcal A \xrightarrow{~\partial_2~} \mathcal A^2 \xrightarrow{~\partial_1~} \mathcal A \rightarrow 0  \; 
\label{eq:chain-complex-deg2}
\end{equation}
where  
$\mathcal A \cong KO_*(\R)^{\Lambda^0_f} \oplus KO_*(\C)^{\Lambda^0_g}$ as described in Section~\ref{sec:k-graph-K-thy}.  
In particular, the spectral sequence has three non-zero columns (for $0 \leq p \leq 2$) and is periodic in $q$ (with period 8). Furthermore, a quick examination of the structure 
of the structure of the chain complex \eqref{eq:chain-complex-deg2} 
 reveals that $\mathcal A_i = 0$ for $i = 3,5,7$ and also that $\mathcal A_i$ is free for $i = 0, 4,6$.  Consequently,  $E^2_{p,q}  = 0$ for all $p$ and for $q = 3,5,7$. 
Moreover, since  $E^2_{2,q} = \ker \partial_2 \leq \mathcal A_q$, 
$E^2_{2,q}$ is free for $q = 0,4,6$. 
Indeed, $E^\infty_{2, q} = \ker d^2_{2,q} \leq E^2_{2,q}$, so $E^\infty_{2,q}$ must also be free for $q = 0,4,6$.
On the other hand,
$KO_i( C \sp * \sr (\Lambda, \gamma))$ is finite in all degrees, so since $E^\infty_{2,q}$ is a quotient of $KO_q(C^*\sr(\Lambda, \gamma))$,  $E^\infty_{2,q} = 0$ for $q = 0,4,6$. 

The above remarks imply that the  $E^\infty_{p,q}$ page of the spectral sequence is as follows:
\[ \begin{array}{ |c|ccc| }
\hline
~  & \hspace{.1cm} \vdots \hspace{.1cm} &\hspace{.1cm} \vdots \hspace{.1cm} & \hspace{.1cm} \vdots \hspace{.1cm} \\ 
7   & 0 & 0 & 0  \\
6     &  *    &  *   &  0    \\
5     &  0    &  0   &  0    \\
4     &  * & * & 0 \\
3     &  0  & 0 &  0   \\
 2    &  * & * & * \\   
 1     &  * & * & *   \\
0   &  * & * & 0  \\ \hline
q/p  & 0 & 1 & 2 \\ \hline
\end{array}  \] 
This $E^\infty$ page identifies a filtration of $KO_*(C^*\sr(\Lambda, \gamma))$, in which the subquotients of $KO_j(C^*\sr(\Lambda,\gamma))$ appear along the diagonal $p+q = j$ of the $E^\infty$ page. 

By hypothesis we have
$KO_0( C \sp * \sr (\Lambda, \gamma)) = \Z_{2(n-1)}$. 
However, the only non-zero group along the diagonal $p+q =0$ is $E^\infty_{0,0}$. Thus $E^\infty_{0,0} = \Z_{2(n-1)}$.
Similarly, since  $KO_7( C \sp * \sr (\Lambda, \gamma)) = \Z_2$ and $KO_6( C \sp * \sr (\Lambda, \gamma)) = \Z_2$,
we must have $E^\infty_{1,6} = \Z_2 = E^\infty_{0,6}$: 
\[ \begin{array}{ |c|ccc| }
\hline
~  & \hspace{.1cm} \vdots \hspace{.1cm} &\hspace{.1cm} \vdots \hspace{.1cm} & \hspace{.1cm} \vdots \hspace{.1cm} \\ 
7   & 0 & 0 & 0  \\
6     & \Z_2    &  \Z_2   &  0    \\
5     &  0    &  0   &  0    \\
4     &  * & * & 0 \\
3     &  0  & 0 &  0   \\
 2    &  * & * & * \\   
 1     &  * & * & *   \\
0   &  \Z_{2(n-1)} & * & 0  \\ \hline
q/p  & 0 & 1 & 2 \\ \hline
\end{array}  \] 

Now, we consider the natural transformation $\eta \colon \mathcal A \rightarrow \mathcal A$ of degree 1. Because $\mathcal A$ is a direct sum of copies of $K\crr(\R)$ and $K\crr(\C)$, {which data already includes the map $\eta$}, the natural transformation $\eta: KO_*(C^*\sr(\Lambda, \gamma)) \to KO_{*+1}(C^*\sr(\Lambda, \gamma))$ exists at the level of the chain complex \eqref{eq:chain-complex-deg2}, passes to a map on the $E^2$-page, then to a map on the $E^\infty$-page, and finally converges to the map
$\eta \colon KO_i( C \sp * \sr (\Lambda, \gamma)) \rightarrow KO_{i+1} (C \sp * \sr (\Lambda, \gamma))  $
described in Section \ref{sec:CRT}.
This means that the map $\eta$ on $KO_*(C \sp * \sr(\Lambda, \gamma))$
respects the filtration of $KO_i( C \sp * \sr (\Lambda, \gamma))$ associated with the spectral sequence; and the resulting maps on the subquotients are the same as that on the $E^\infty$ page.
In particular, the diagonals of the $E^\infty$-page that yield $KO_6(C \sp * \sr (\Lambda, \gamma))$ and $KO_7(C \sp * \sr (\Lambda, \gamma))$ give the  commutative diagram 
\[ \xymatrix{
	0  \ar[r] 
	& E^\infty_{0.6} \ar[r] \ar[d]^{\eta} 	
	& KO_6( C \sp * \sr (\Lambda, \gamma)) \ar[r] \ar[d]^\eta
	& E^\infty_{1,5} \ar[r] \ar[d]^\eta
	& 0 \\
	0  \ar[r] 
	& E^\infty_{0.7} \ar[r] 	
	& KO_7( C \sp * \sr (\Lambda, \gamma)) \ar[r] 
	& E^\infty_{1,6} \ar[r] 
	& 0 } \]
	or equivalently 
\[ \xymatrix{
	0  \ar[r] 
	& \Z_2 \ar[r] \ar[d]^{\eta} 	
	& \Z_2 \ar[r] \ar[d]^\eta
	& 0 \ar[r] \ar[d]^\eta
	& 0 \\
	0 \ar[r]
	& 0 \ar[r] 
	& \Z_2 \ar[r] 
	& \Z_2 \ar[r] 
	& 0	
}\]
 The vertical $\eta$ maps on the left and right come from $\mathcal A$ as described above.

%

Since the nonzero horizontal maps must be isomorphisms, the commutative diagram forces the vertical map
$ \eta_6 \colon KO_6( C \sp * \sr (\Lambda, \gamma)) \rightarrow KO_{7} (C \sp * \sr (\Lambda, \gamma)) $
in the center of the diagram
to be zero, which contradicts the known value of 
$\eta$ in $KO_*(\mathcal{E}_n)$.
\end{proof}

\section{Existence of a Rank-3 Graph with Involution} \label{construction}

In this section, we will construct a 3-graph $\Lambda_n$ with involution $\gamma_n$, by taking products of 1-graphs with involution, such that $C^*\sr(\Lambda_n, \gamma_n)$ is stably isomorphic to $\mathcal E_n$. 
In what follows, we use the following convention for suspension of graded modules. If $H = \{ H_i \}$ is a $\Z$-graded group, then $\Sigma H$ is a $\Z$-graded group with $(\Sigma H)_i = H_{i+1}$. Similarly, $\Sigma^{-1} H$ is a $\Z$-graded group with $(\Sigma^{-1} H)_i = H_{i-1}$.
This convention is consistent with $K$-theory and suspensions of $C^*$-algebras: $K_*(S^n A) = \Sigma^n K_*(A)$.

For the proof of the following proposition, we will need a few more preliminaries.  
For a graph $\Lambda$, a subset $X\subseteq \Lambda^0$ is {\it hereditary} if whenever $v \in X$ and $v \Lambda w \not= \emptyset$, then $w \in X$.   A {\it cycle} in $\Lambda$ is a path $e_1 e_2 \cdots e_n$ with $r(e_1) = s(e_n)$ but, for all $1\leq i < n$, we have $s(e_i) \not= r(e_{i+1})$.  We say that a cycle {\it has an entrance} if there exists $ 1 \leq j \leq n$ and an edge $f \not= e_j$ with $r(f) = r(e_j)$.   If the only hereditary subsets of $\Lambda^0$ are $\emptyset$ and $\Lambda^0$, and every cycle in $\Lambda$ has an entrance, then  $C^*(\Lambda)$ is simple \cite[Theorem 12]{szyman}. 

\begin{proposition} \label{suspension}
There exists a 1-graph $\Lambda$ and involution $\gamma$ such that $C \sp * \sr (\Lambda, \gamma)$ is simple and purely infinite
and that $K\crr( C \sp * \sr(\Lambda, \gamma)) \cong \Sigma K\crr(\R)$.  
\end{proposition}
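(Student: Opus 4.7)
The plan is to exhibit an explicit 1-graph $\Lambda$ and compatible involution $\gamma$, then verify the two conclusions separately. First I would search for a directed graph $\Lambda$ whose adjacency matrix $M$ satisfies $\ker(I-M^t)\cong\Z$ and $\coker(I-M^t)=0$, so that the complex K-theory $K_0(C^*(\Lambda))=0$ and $K_1(C^*(\Lambda))=\Z$ match $K_*(S\C)$, which is what $\Sigma K\crr(\R)$ demands on the complex side. Because a $2\times 2$ integer matrix of vanishing determinant always has a free summand in its cokernel, $\Lambda$ must have at least three vertices. I would further arrange $\Lambda$ to be cofinal and to have every cycle equipped with an entrance, so that $C^*(\Lambda)$ is simple and purely infinite by \cite[Theorem 12]{szyman}; these properties pass to the real form $C^*\sr(\Lambda,\gamma)$ since ideals of a real $C^*$-algebra are in bijection with the $\gamma$-invariant ideals of its complexification.

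Next I would choose an involution $\gamma$. The torsion groups $\Z_2$ appearing in $(\Sigma K\crr(\R))_0$ and $(\Sigma K\crr(\R))_1$ can only arise from the $K\crr(\R)$-summands of $\mathcal{A}=K\crr(\R)^{\Lambda^0_f}\oplus K\crr(\C)^{\Lambda^0_g}$, so $\gamma$ must fix at least one vertex. With the graph and involution in hand, I would compute $K\crr(C^*\sr(\Lambda,\gamma))$ using the rank-1 spectral sequence of \cite[Theorem~3.17]{boersema-gillaspy}. Its chain complex is the two-term complex $0\to\mathcal{A}\xrightarrow{\partial_1}\mathcal{A}\to 0$ with $\partial_1=\rho^1$; since the spectral sequence has only two columns, no higher differential $d^r$ for $r\ge 2$ has room and $E^\infty=E^2$. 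Each $KO_i$ therefore sits in the extension
\[
0\to\coker(\partial_1)_i\to KO_i(C^*\sr(\Lambda,\gamma))\to\ker(\partial_1)_{i-1}\to 0,
\]
and analogously for $KU_*$. Computing $(\rho^1)^O_j$ for $0\le j\le 7$ via the variations of $B=I-M^t$ tabulated in \cite[Table~3]{boersema-gillaspy} and \cite[Theorem~4.4]{boersema-MJM} yields the kernels and cokernels in each degree, and comparison with $\Sigma K\crr(\R)$ then finishes the identification.

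The main obstacle will be finding a graph and involution whose combined combinatorics simultaneously produce the complex pattern $(0,\Z)$ in $KU_*$ and the precise real pattern $(\Z_2,\Z_2,0,\Z,0,0,0,\Z)$ in $KO_*$, while also satisfying the graph-theoretic conditions for simplicity and pure infiniteness. Because the real differentials $(\rho^1)^O_j$ differ across degrees, they impose distinct constraints on the adjacency matrix and the involution; orchestrating them to vanish in degrees $4$--$6$ while leaving the correct $\Z_2$ torsion in degrees $0$ and $1$ will isolate a fairly specific small example. A secondary but real difficulty is resolving the extensions in the $E^\infty$ page (e.g., distinguishing $\Z_2$ from $\Z/4$ in $KO_0$, or confirming that $KO_1$ is not $\Z_2\oplus\Z$); this is typically handled by tracking the natural transformations $\eta$, $r$, $c$, and $\psi$ through the spectral sequence using the $\mathcal{CR}$-module structure of $\mathcal{A}$.
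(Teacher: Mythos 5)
Your verification strategy is essentially the one the paper uses (compute $KU_*$ from $I-M^t$, run the two-column rank-1 spectral sequence of \cite[Theorem~3.17]{boersema-gillaspy} for the real part, and invoke \cite[Theorem 12]{szyman} plus the descent of simplicity and pure infiniteness to the real form). But the proposal has a genuine gap: the entire content of the proposition is the \emph{existence} of the graph with involution, and you never produce one. Everything after ``I would search for a directed graph $\Lambda$ whose adjacency matrix satisfies\dots'' is a description of how you would check a candidate, not a proof that a candidate exists; you yourself flag ``finding a graph and involution whose combined combinatorics simultaneously produce'' the right $K$-theory as the main obstacle. Until that example is exhibited and the kernels/cokernels are actually computed, there is no proof.

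A second, more concrete problem is your feasibility analysis. You argue only that $\Lambda$ needs at least three vertices, but the constraint is much stronger: for \emph{any} finite vertex set, $I-M^t$ is a square integer matrix, so its kernel and cokernel have the same free rank, and $KU_1=\Z$ would force a $\Z$ summand in $KU_0$. Hence no finite graph can realize $KU_*=(0,\Z)$, and the example must have infinitely many vertices (the paper's $\Lambda$ is an infinite three-branch graph, with the involution fixing one branch and swapping the other two). A search confined to ``fairly specific small examples,'' as your last paragraph suggests, would never terminate. Two smaller points: in this setting the extension problems you worry about for $KO_0$ and $KO_1$ largely evaporate, because $\mathcal A_{-1}=0$ kills $E^\infty_{1,-1}$ and one computes $\ker(\partial_1)_0=0$ directly; and the paper settles the remaining degrees $KO_2,\dots,KO_7$ not by chasing $\eta,r,c,\psi$ through the spectral sequence but by feeding $KO_0=KO_1=\Z_2$ and the known $KU_*$ into the long exact sequence \eqref{eq:LES} together with the relation $\eta^3=0$.
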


\begin{proof}

Let $\Lambda$ be the 1-graph below (which extends infinitely in both directions) and let $\gamma$ be the non-trivial involution, which fixes the vertices and edges of the infinite branch on the left and swaps the vertices and edges of the two infinite branches on the right in the obvious way. (In fact $\gamma$ is the only non-trivial involution on $\Lambda$.)

\[
\resizebox{5in}{!}{
\xymatrix{ 
        &&&&&&&&
        && \bullet \ar[dr] \ar@(ur,ul) 
        && \bullet \ar[dr] \ar@(ur,ul) 
        && \bullet \ar[dr] \ar@(ur,ul) 
        && 
        \\
        &
        && \bullet  \ar@(ur,ul)  \ar[dr]
        && \bullet  \ar@(ur,ul)   \ar[dr]
        && \bullet  \ar@(ur,ul)   \ar[dr]
        &&  {\bullet}   \ar@{<->}[dd] \ar[ur]
        && {\bullet} \ar[ll]  \ar[ur] 
        && {\bullet} \ar[ll]  \ar[ur] 
        && {\bullet} \ar[ll] \ar@{<.}[rr]  \ar@{.>}[ur]
        &&    \\
         \ar@{<.}[rr] 
        && \bullet   \ar@{<.}[ul] \ar[ur]
        && \bullet \ar[ll] \ar[ur]
        && \bullet \ar[ll] \ar[ur]
        && \bullet \ar@{<->}[ur] \ar@{<->}[dr]  \ar[ll]
        \\
        &&&&&&&&&  {\bullet} \ar[dr]
        && {\bullet} \ar[ll]  \ar[dr] 
         && {\bullet} \ar[ll]  \ar[dr] 
        && {\bullet} \ar@{<.}[rr]   \ar@{.>}[dr] \ar[ll]
        && 
         \\
        &&&&&&&&
        && \bullet \ar[ur] \ar@(dr,dl) 
        && \bullet \ar[ur] \ar@(dr,dl) 
        && \bullet \ar[ur] \ar@(dr,dl) 
        && 
        } \; }
     \]

\vspace{.5cm}

It is straightforward to check that $\Lambda$ has no nontrivial hereditary subsets, and that every cycle has an entrance, so simplicity of the complex algebra $C^*(\Lambda)$ follows from \cite[Theorem 12]{szyman}. 
As every cycle has an entrance, $\Lambda$ is aperiodic.  One now easily checks 
 that the conditions of  \cite[Theorem~3.9]{kpr} are satisfied, and so $C \sp * (\Lambda)$ is purely infinite.
Consequently,  \cite[Theorem~3.9]{brs} implies that the real $C \sp *$-algebra $C \sp * \sr(\Lambda, \gamma)$ is also simple and purely infinite.

We now show that $KU_0( C \sp * \sr( \Lambda, \gamma)) = 0$ and $KU_1( C \sp * \sr( \Lambda, \gamma)) = \Z$. (This is the same as calculating $K_*( C \sp * (\Lambda))$ and does not involve the involution $\gamma$.)  Let $M$ be the adjacency matrix for $\Lambda$ (so $M_{v,w}$ is the number of edges from $w$ to $v$, as in Equation~\ref{eq:adjacency-matrices}). Then  $KU_0( C \sp * \sr( \Lambda, \gamma)) \cong \coker (I - M^t)$.  That is, 
$KU_0( C \sp * \sr (\Lambda, \gamma))$ is generated by vertex projection classes $[p_v]$, which are subject only to relations of the form
$$ [p_v] = \sum_{w \in \Lambda^0} M_{v,w} [p_w]  \; .$$
Let $v$ be one of the vertices of $\Lambda$ that supports a loop, and let $w\not= v$ be the vertex for which there is an edge from $w$ to $v$ (in each case there is a unique such $w$). Then the formula above implies that
$[p_v] = [p_v] + [p_w]$, and so $[p_w] = 0$. If $[p_w] = 0$ we will say that $w$ is a {\it zero vertex}.
Now if $w$ is a zero vertex and there is only one edge to $w$, say from vertex $u$, then it follows that $u$ is also a zero vertex. 
More generally, if $w$ is a zero vertex and all the edges to $w$ are known to emanate from zero vertices except possibly one edge from vertex $u$, then it follows that $u$ is also a zero vertex.
Using these principles, it is now straightforward to work through the graph and to find that every vertex is a zero vertex. Hence $KU_0( C \sp * \sr( \Lambda, \gamma)) =  0$.

We know that $KU_1( C \sp * \sr( \Lambda, \gamma)) \cong \ker (I - M^t)$, which is to say that 
\begin{equation}
\label{eq:alpha}
KU_1( C \sp * \sr( \Lambda, \gamma))\cong  N_{\Lambda}: =
 \Big\{ \alpha \colon \Lambda^0 \rightarrow \Z \mid  \alpha(v) =  \sum_{w \in \Lambda^0} M_{w,v} \, \alpha(w) \Big\} \; .
 \end{equation}
Let $v$ be one of the vertices of $\Lambda$ that supports a loop, and let $w \not= v $ be the vertex for which there is an edge from $v$ to $w$ (in each case there is a unique such $w$). 
Then for any $\alpha \in N_\Lambda$,
$\alpha(v) = \alpha(v) + \alpha(w)$, which implies that $\alpha(w) = 0$. 
If $\alpha(w) = 0$ for all $\alpha \in N_\Lambda$ we will say that $w$ is a {\it null vertex}.
Now if $w$ is a null vertex and there is only one edge emanating from $w$, say to vertex $u$, then it follows that $u$ is also a null vertex. 
More generally, if $w$ is a null vertex and all the edges from $w$ are known to point to null vertices except possibly one edge to vertex $u$, then $u$ is also a null vertex.
Using these principles, it is now straightforward to work through the graph and to find that every vertex is a null vertex, except for the six vertices labelled $u,v, w, x,y,z$ shown below.  
\[
\resizebox{5in}{!}{
\xymatrix{ 
        &&&&&&&&
        && v \bullet \ar[dr] \ar@(ur,ul) 
        && u \bullet \ar[dr] \ar@(ur,ul) 
        && \bullet \ar[dr] \ar@(ur,ul) 
        && 
        \\
        &
        &&  \bullet  \ar@(ur,ul)  \ar[dr]
        && \bullet  \ar@(ur,ul)   \ar[dr]
        && \bullet  \ar@(ur,ul)   \ar[dr]
        &&  w {\bullet}   \ar@{<->}[dd] \ar[ur]
        && {\bullet} \ar[ll]  \ar[ur] 
        && {\bullet} \ar[ll]  \ar[ur] 
        && {\bullet} \ar[ll] \ar@{<.}[rr]  \ar@{.>}[ur]
        &&    \\
         \ar@{<.}[rr] 
        && \bullet   \ar@{<.}[ul] \ar[ur]
        && \bullet \ar[ll] \ar[ur]
        && \bullet \ar[ll] \ar[ur]
        && \bullet \ar@{<->}[ur] \ar@{<->}[dr]  \ar[ll]
        \\
        &&&&&&&&&  {x \bullet} \ar[dr]
        && {\bullet} \ar[ll]  \ar[dr] 
         && {\bullet} \ar[ll]  \ar[dr] 
        && {\bullet} \ar@{<.}[rr]   \ar@{.>}[dr] \ar[ll]
        && 
         \\
        &&&&&&&&
        &&  y \bullet  \ar[ur] \ar@(dr,dl) 
        && z \bullet \ar[ur] \ar@(dr,dl) 
        && \bullet \ar[ur] \ar@(dr,dl) 
        && 
        } \; }
     \]


Using Equation \eqref{eq:alpha} and the fact that the unlabeled vertices are null vertices, we see that any $\alpha \in N_\Lambda$ must satisfy the equations
\begin{align*}
& 0 = \alpha(u) + \alpha(w) & \quad 
\alpha(w) = \alpha(v) + \alpha(x) & \\
& 0 = \alpha(w) + \alpha(x) & \quad 
\alpha(x) = \alpha(w) + \alpha(y) & \quad 0 = \alpha(x) + \alpha(z)
\end{align*}
Solving this system over $\Z$, we find that $\alpha(u)$ is a free variable and that
\[ \alpha(v) = -2 \alpha(u)  ,~ 
 \alpha(w) = -\alpha(u)  ,~ 
 \alpha(x) = \alpha(u)  ,~ 
 \alpha(y) = 2 \alpha(u)  ,~  \text{and}~
 \alpha(z) = -\alpha(u) \; . \]  
Thus $N_\Lambda \cong \Z$.  Hence $KU_* ( C \sp * \sr (\Lambda, \gamma)) = K_*( C \sp * (\Lambda)) = (0, \Z)$. 

Turning to the real $K$-theory, we now prove that $KO_*( C \sp * \sr( \Lambda, \gamma))) = (\Z_2, \Z_2, 0, \Z, 0, 0, 0, \Z)$.
First, we show that the real and complex $E^2 = E^\infty$ page of the Evans spectral sequence for $C^*\sr(\Lambda,\gamma)$ is as follows.
		
\def\vvline{\hfil\kern\arraycolsep\vline\kern-\arraycolsep\hfilneg}
		\vspace{-4mm} 	
\begin{equation}	\begin{array}{ cccc } 
				\multicolumn
				{3}{c}{ \underline{ \text{real part}}} \\
				\vspace{.25cm} \\
				\vdots \vvline& \hspace{.3cm} \vdots \hspace{.3cm} & \hspace{.3cm} \vdots \hspace{.3cm}  \\
				7  \vvline & 0 & 0  \\
				6   \vvline  & 0 & \Z   \\
				5   \vvline  &  0    &  0     \\
				4   \vvline  &  0 & 0  \\
				3    \vvline &  0  & 0    \\
				2   \vvline  & 0 & \Z    \\
				1   \vvline  &   \Z_2  &  0   \\
				0 \vvline & \Z_2 & 0 \\  \hline
				q/p \vvline & 0 & 1 
			\end{array}
			\hspace{3cm}
			\begin{array}{ cccc }
				\multicolumn
				{3}{c}{ \underline{ \text{complex part}}} \\
				\vspace{.25cm} \\
				\vdots  \vvline & \hspace{.3cm} \vdots \hspace{.3cm} &\hspace{.3cm} \vdots \hspace{.3cm}  \\
				7  \vvline & 0 & 0  \\
				6   \vvline  & 0 & \Z \\
				5   \vvline  &  0    &  0       \\
				4   \vvline  &  0 & \Z \\
				3    \vvline &  0  & 0   \\
				2   \vvline  &  0 & \Z \\
				1   \vvline  & 0    &    0    \\
				0   \vvline  & 0 & \Z \\ \hline
				q/p \vvline & 0 & 1 
			\end{array}
\label{eq:spec-seq-real-suspension}
\end{equation}

We have already discussed the complex part of this spectral sequence. For the real part we will only discuss the computations for the rows corresponding to $q = -1, 0, 1$. As we will see, this is enough to determine $KO_*(C \sp * \sr (\Lambda, \gamma))$. The other rows can be computed using similar methods and we include them in the table above for completeness, but we will neither need nor discuss them.

First, the spectral sequence for a 1-graph with involution always vanishes in row $q= -1$, since the chain complex vanishes in that degree. 
To compute row $q = 1$ of the spectral sequence, from Table~3 and Theorem~3.17 of \cite{boersema-gillaspy}, we have that $E^2_{0,1} $ and $E^2_{1,1}$ are the cokernel and kernel of the map
\vspace{-2mm} 
$$ (\partial_1)_1 = I - M^t_{11} \colon  \Z_2^{\Lambda^0_f} \rightarrow \Z_2^{\Lambda^0_f} $$
where $\Lambda^0_f$ is the set of fixed vertices of $(\Lambda, \gamma)$ and $M_{11}$ is the restriction of the incidence matrix to those vertices.
So it suffices to consider the graph consisting of the fixed points of $\Lambda$, shown here.
\[
\resizebox{4in}{!}{
\xymatrix{ 
               &
        && \bullet  \ar@(ur,ul)  \ar[dr]
        && \bullet  \ar@(ur,ul)   \ar[dr]
          &&  \bullet x  \ar@(ur,ul)   \ar[dr]
        &&  \bullet y  \ar@(ur,ul)   \ar[dr]
         \\
                \ar@{<.}[rr] 
        && \bullet  \ar@{<.}[ul] \ar[ur]
        && \bullet \ar[ll] \ar[ur]
        && \bullet \ar[ll] \ar[ur]
          && \bullet \ar[ll] \ar[ur]
        &&  \bullet z  \ar[ll]
               } \; }
     \]
Using this graph, and the same sort of analysis that we did in the complex case, we find that
$\coker (I - M^t_{11}) = \Z_2$. More precisely, working modulo 2 we find that $[p_v] = 0$ for all vertices $v$ in $\Lambda^0_f$ except those labeled $x,y$ and $z$ in the graph above and that $[p_x] = [p_y] = [p_z] \neq 0$. We also find easily that $\ker (I - M^t_{11}) = 0$.

Now, for $q = 0$, 
let $\Lambda^0_f$ be the set of fixed vertices (the branch on the left of $\Lambda)$, $\Lambda^0_g$ be the set of vertices of the ``upper right" branch of $\Lambda$, and $\Lambda^0_h$ be the set of vertices of the ``lower right" branch.
With this structure on $\Lambda^0$, the (infinite) matrix $B = I - M^t$ can be written in block form as 
\[B = I - M^t= \begin{pmatrix} B_{11} & B_{12} & B_{12} \\ B_{2 1} & B_{22} & B_{23} \\ B_{21} & B_{23} & B_{33} \end{pmatrix} \;  \]
where, for example, $B_{12}$ keeps track of edges from vertices in $\Lambda^0_f$ to vertices in $\Lambda^0_g$.
From Table~3 of \cite{boersema-gillaspy}, we have that 
$(\partial_1)_0 : \Z^{\Lambda^0_f} \oplus \Z^{\Lambda^0_g} 
		\rightarrow \Z^{\Lambda^0_f} \oplus \Z^{\Lambda^0_g}$ is given by 
\[ (\partial_1)_0 =  \begin{pmatrix} B_{11} & 2B_{12} \\ B_{21} & B_{22} + B_{23} \end{pmatrix} \; .\]

We will use a new graph $\Lambda'$ to analyze this map. 
The graph $\Lambda'$, shown below, is obtained from $\Lambda$ by keeping the vertices from $ \Lambda^0_f$ and $ \Lambda^0_g$. 
For each edge in $\Lambda$ from a vertex in $\Lambda^0_g$ to a vertex in 
$\Lambda^0_f$, we create a corresponding edge in $\Lambda'$ and for each edge in $\Lambda$ from a vertex in $\Lambda^0_f$ to a vertex in $\Lambda^0_g$ we create 2 corresponding edges in $\Lambda'$. 
Also, for each edge from a vertex $v = \gamma(u) \in \Lambda^0_h$ to a vertex $w\in \Lambda^0_g$ we obtain an edge in $\Lambda'$ from $u$ to $w$. 

\[ \resizebox{5in}{!}{
\xymatrix{   
        &
        && \bullet x \ar@(ur,ul)  \ar[dr] \ar@{<-}[dl]
        && \bullet z  \ar@(ur,ul)   \ar[dr] \ar@{<-}[dl]
        &&&&  \bullet   \ar[dr] \ar@(ur,ul) 
        && \bullet  \ar[dr] \ar@(ur,ul)  
        && && \\
         \ar@{<.}[rr] 
        && \bullet  \ar@{<.}[ul]
        && \bullet \ar[ll] 
        && \bullet y \ar@/_/[rr]_{(2)} \ar[ll]
        && \bullet  w   \ar@(dr, dl) \ar@/_/[ll] \ar[ur]
        && \bullet   \ar[ll] \ar[ur]
        && \bullet  \ar[ll] \ar@{.>}[ur] \ar@{<.}[rr]
        &&
        } \; }
     \]
\vspace{.5cm}
By construction the adjacency matrix $M'$ for the graph $\Lambda'$ satisfies
\vspace{-5mm} 
$$ I - (M')^t = \begin{pmatrix} B_{11} & 2B_{12} \\ B_{21} & B_{22} + B_{23} \end{pmatrix} \; . $$
Therefore, we can use the graph $\Lambda'$ to find the cokernel and kernel of $(\partial_1)_0$. 
Using the same logic and terminology we used when calculating the complex $K$-theory, we see that $w$ is a zero vertex because it emits an edge to a vertex $v$ which supports a loop,  and the edge from $w$ to $v$ is the only non-loop edge which points to $v$. Indeed, every vertex along the bottom row of the graph $\Lambda'$ except $y$ is a zero vertex.  The fact that these zero vertices (with the exception of $w$) only receive edges from one (potentially) nonzero vertex on the top row of $\Lambda'$ implies that every vertex in the top row except $x$ and $z$ are also zero vertices. Now, $w$ is a zero vertex, but since there are two edges from $y$ to $w$ we obtain the relation
$[p_w] = [p_w] + 2 [p_y]$ which implies that $2 [p_y] = 0$, but $[p_y] \neq 0$.
Finally, from the relations $[p_x] = - [p_y]$ and $[p_z] = [p_y]$ we conclude that $\coker(\partial_1)_0 = \Z_2$.

To compute $\ker (\partial_1)_0$, we also proceed as in the computations for the complex case: All of the vertices in the bottom row of $\Lambda'$, save $w$, are null vertices. Moreover, if a null vertex $v$ emits $n$ edges to a single potentially non-null vertex $u$, we must have $n \alpha(u) = 0$ for any $\alpha \in N_{\Lambda'}$, and as $\alpha(u) \in \Z$ we conclude that $u$ must also be null.  It follows that $w$ is null, as are all of the vertices in the top row of $\Lambda'$.  That is, $\ker (\partial_1)_0 = \{0\}$. 

Now, with the three rows of that we've identified, the spectral sequence \eqref{eq:spec-seq-real-suspension} implies that   
$KO_0(C \sp * \sr (\Lambda, \gamma)) \cong KO_1(C \sp * \sr (\Lambda, \gamma)) \cong \Z_2$.
We claim that using this we can compute $KO_i(C \sp * \sr (\Lambda, \gamma))$ for $2 \leq i \leq 7$
using the long exact sequence \eqref{eq:LES}
and other aspects of $\CR$-structure.
The fact that $KU_6(C^*\sr(\Lambda, \gamma)) = KU_0(C \sp * \sr (\Lambda, \gamma)) = 0$ implies that $\eta_0$ is  injective, and hence an isomorphism, and also that  $\eta_{-1}$ is surjective. Since $KU_2(C \sp * \sr (\Lambda, \gamma)) = 0$ it follows that $\eta_1$ is surjective. 
Thus if $KO_2(C \sp * \sr (\Lambda, \gamma))$ has a non-zero element, then it would have to be in the image of $\eta_1 \circ \eta_0 \circ \eta_{-1} =\eta^3$. But $ \eta^3 = 0$ for all real $C \sp *$-algebras. Thus $KO_2(C \sp * \sr (\Lambda, \gamma)) = 0$.

Since $KO_2(C \sp * \sr (\Lambda, \gamma)) = 0$, the long exact sequence implies that $c_3 \colon KO_3(C \sp * \sr (\Lambda, \gamma)) \rightarrow KU_3(C \sp * \sr (\Lambda, \gamma)) = \Z$ is injective.  This forces $KO_3(C^*\sr(\Lambda, \gamma)) = \Z$.  Moreover, as $\text{im} \, r_1 = \ker \eta_1 = \Z_2$ we must have $r_1: \Z \to \Z_2$ the unique nonzero map.  Since $\text{im}\, c_3 \cong \ker  r_1$, we conclude that $c_3$ is multiplication by 2.  The relation $rc =2$ then implies that $r_3 = 1$.

Continuing this process using the long exact sequence, we compute that $KO_*( C \sp * \sr( \Lambda, \gamma))) = (\Z_2, \Z_2, 0, \Z, 0, 0, 0, \Z)$.  The module maps $\eta, r, c, \psi$ are then completely determined by these groups and the long exact sequence \eqref{eq:LES}; that is,  $K\crr(C^*\sr(\Lambda, \gamma))$ and hence $K\crt(C^*\sr(\Lambda, \gamma))$ coincide with $\Sigma K\crt(\R)$.
\end{proof}

\begin{lemma} \label{products}
Suppose that $(\Lambda_1, \gamma_1)$ and $(\Lambda_2, \gamma_2)$ are higher-rank graphs with involutions. Then $(\Lambda_1 \times \Lambda_2, \gamma_1 \times \gamma_2)$ is  a higher-rank graph with involution and
\[ C \sp * \sr( \Lambda_1 \times \Lambda_2, \gamma_1 \times \gamma_2)
	\cong C \sp * \sr(\Lambda_1, \gamma_1) \otimes\sr C \sp * \sr(\Lambda_2, \gamma_2) \; . \]
\end{lemma}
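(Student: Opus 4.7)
First, verifying that $(\Lambda_1 \times \Lambda_2, \gamma_1 \times \gamma_2)$ is a higher-rank graph with involution is a direct structural check: the product of two functors is a functor, $\gamma_1 \times \gamma_2$ preserves the product degree $d(\lambda_1, \lambda_2) = (d_1(\lambda_1), d_2(\lambda_2))$ because each $\gamma_i$ does, and $(\gamma_1 \times \gamma_2)^2 = \gamma_1^2 \times \gamma_2^2 = \id_{\Lambda_1 \times \Lambda_2}$.

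For the $C^*$-algebra isomorphism, the plan is to leverage the complex isomorphism $\Phi \colon C^*(\Lambda_1 \times \Lambda_2) \xrightarrow{\cong} C^*(\Lambda_1) \otimes C^*(\Lambda_2)$ of \cite[Corollary 3.5(iv)]{kp}, sending $s_{(\lambda_1, \lambda_2)} \mapsto s_{\lambda_1} \otimes s_{\lambda_2}$, and to check its compatibility with the antimultiplicative involutions. It is convenient to pass to the equivalent conjugate-linear formulation: for each $i$ let $\sigma_i \colon C^*(\Lambda_i) \to C^*(\Lambda_i)$ be the conjugate-linear $*$-automorphism of order $2$ defined by $\sigma_i(a) = \widetilde\gamma_i(a)^*$, so that $\sigma_i(s_\lambda) = s_{\gamma_i(\lambda)}$ and $C^*\sr(\Lambda_i, \gamma_i)$ is the fixed-point subalgebra of $\sigma_i$. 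Let $\sigma$ be the analogous conjugate-linear $*$-automorphism on $C^*(\Lambda_1 \times \Lambda_2)$ associated with $\gamma_1 \times \gamma_2$. Since higher-rank graph $C^*$-algebras are nuclear, $\sigma_1$ and $\sigma_2$ extend uniquely to a conjugate-linear $*$-automorphism $\sigma_1 \otimes \sigma_2$ on $C^*(\Lambda_1) \otimes C^*(\Lambda_2)$.

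The heart of the argument is the identity $\Phi \circ \sigma = (\sigma_1 \otimes \sigma_2) \circ \Phi$. Both sides are continuous conjugate-linear $*$-homomorphisms, so by density it suffices to verify the equality on the generators $s_{(\lambda_1, \lambda_2)}$, where each side returns $s_{\gamma_1(\lambda_1)} \otimes s_{\gamma_2(\lambda_2)}$. Hence $\Phi$ restricts to an isomorphism from $C^*\sr(\Lambda_1 \times \Lambda_2, \gamma_1 \times \gamma_2)$ onto the fixed-point algebra $(C^*(\Lambda_1) \otimes C^*(\Lambda_2))^{\sigma_1 \otimes \sigma_2}$. The final, and most delicate, step is to identify this fixed-point algebra with the real tensor product $C^*\sr(\Lambda_1, \gamma_1) \otimes\sr C^*\sr(\Lambda_2, \gamma_2)$; this is a standard fact in the theory of real $C^*$-algebras, justified by the fact that the nuclearity of the factors makes the ambient complex tensor product unambiguous and that, by definition, the real tensor product of two real $C^*$-algebras arises as the fixed-point subalgebra of the corresponding conjugate-linear involution on the complexified tensor product. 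This last identification is the chief subtlety to cite carefully; the compatibility check on generators, which is the novel content of the lemma, is then immediate from the definition of $\gamma_1 \times \gamma_2$.
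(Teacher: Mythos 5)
Your proposal is correct and follows essentially the same route as the paper: both rest on the Kumjian--Pask isomorphism $C^*(\Lambda_1\times\Lambda_2)\cong C^*(\Lambda_1)\otimes C^*(\Lambda_2)$ and a check on generators that it intertwines the involutions induced by $\gamma_1\times\gamma_2$ and by $\gamma_1,\gamma_2$ separately. Your reformulation via the conjugate-linear automorphisms $\sigma_i(a)=\widetilde\gamma_i(a)^*$ and their fixed-point algebras is equivalent to the paper's computation with the antimultiplicative maps $\widetilde\gamma_i$, and your explicit flagging of the final identification of the fixed-point algebra with the real tensor product is a reasonable (slightly more careful) gloss on a step the paper treats as implicit.
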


\begin{proof}
Assume that $\Lambda_1$ and $\Lambda_2$ have rank $k_1$ and $k_2$ respectively.
From \cite[Proposition~1.8]{kp} the product $\Lambda_1 \times \Lambda_2$ is a graph of rank $k_1 + k_2$, with degree functor
$d(\lambda_1, \lambda_2) = d(\lambda_1) + d(\lambda_2)$. Furthermore, there is an involution $\gamma$ on $\Lambda_1 \times \Lambda_2$  defined by
$\gamma( \lambda_1, \lambda_2) = (\gamma_1(\lambda_1), \gamma_2(\lambda_2) )$.

From \cite[Corollary~3.5]{kp}, there is an isomorphism $\phi \colon C \sp *(\Lambda_1 \times \Lambda_2) \rightarrow C \sp * (\Lambda_1) \otimes C \sp * (\Lambda_2)$ defined by $\phi( s_{(\lambda_1, \lambda_2)}) = s_{\lambda_1} \otimes s_{\lambda_2}$. To finish the proof, we need only show that $\phi$ preserves the real structures \eqref{eq:real-C*-alg}
of $C \sp *(\Lambda_1 \times \Lambda_2) $ and $ C \sp * (\Lambda_1) \otimes C \sp * (\Lambda_2)$ which are induced by the graphical involutions $\gamma_i$.  This is straightforward:
\begin{align*}
\phi (\widetilde \gamma(s_{(\lambda_1, \lambda_2)} )
	&=\phi (s^*_{(\gamma_1(\lambda_1),  \gamma_2(\lambda_2))} )) 
	= s^*_{\gamma_1(\lambda_1)} \otimes s^*_{\gamma_2(\lambda_2)} 
	= \widetilde{\gamma_1}( s_{\lambda_1}) \otimes \widetilde{\gamma_2}( s_{\lambda_2}) \\
	&=\widetilde{\gamma_1 \otimes \gamma_2}( s_{\lambda_1} \otimes s_{\lambda_2})   
	= \widetilde{\gamma_1 \otimes \gamma_2}(\phi( s_{(\lambda_1, \lambda_2)} ) ) \; . \qedhere
\end{align*}
\end{proof}

\begin{theorem} \label{rank3existance} Let $k = 2n+1$ be an odd integer, $k \geq 3$. There exists a rank-3 graph with involution $(\Lambda_n, \gamma_n)$ such that $C\sp * \sr (\Lambda_n, \gamma_n) \cong \mathcal{K}\sr \otimes\sr \E_{2n+1}$. 
Furthermore, there exists a projection $p \in C\sp * \sr(\Lambda_n, \gamma_n)$ such that $p C\sp * \sr(\Lambda_n, \gamma_n) p \cong \E_{2n+1} $.
\end{theorem}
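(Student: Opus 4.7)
The plan is to construct $\Lambda_n$ as a product of three rank-1 graphs with involution. Let $(E_n, \ve_n)$ be the rank-1 graph with involution of \cite[Example 6.2]{boersema-MJM}, for which $C^*\sr(E_n, \ve_n) \sim_{ME} S^6 \E_{2n+1}$, and let $(\Lambda, \gamma)$ be the rank-1 graph with involution of Proposition~\ref{suspension}. Define
\[ \Lambda_n := E_n \times \Lambda \times \Lambda \quad \text{and} \quad \gamma_n := \ve_n \times \gamma \times \gamma. \]
Two applications of Lemma~\ref{products} then show that $(\Lambda_n, \gamma_n)$ is a rank-3 graph with involution and that
\[ C^*\sr(\Lambda_n, \gamma_n) \cong C^*\sr(E_n, \ve_n) \otimes\sr C^*\sr(\Lambda, \gamma) \otimes\sr C^*\sr(\Lambda, \gamma). \]

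Next I would compute $K\crt(C^*\sr(\Lambda_n, \gamma_n))$ via the real K\"unneth theorem of \cite{boersema2002}. By Proposition~\ref{suspension} and \cite[Proposition~2.5]{brs}, one has $K\crt(C^*\sr(\Lambda, \gamma)) \cong \Sigma K\crt(\R)$ as $\CRT$-modules; since this module is free the K\"unneth sequence degenerates and yields $K\crt(C^*\sr(\Lambda, \gamma) \otimes\sr C^*\sr(\Lambda, \gamma)) \cong \Sigma^2 K\crt(\R)$. Combining this with $K\crt(C^*\sr(E_n, \ve_n)) \cong \Sigma^6 K\crt(\E_{2n+1})$ from \cite[Example~6.2]{boersema-MJM} and invoking 8-periodicity of real $K$-theory gives
\[ K\crt(C^*\sr(\Lambda_n, \gamma_n)) \cong \Sigma^8 K\crt(\E_{2n+1}) \cong K\crt(\E_{2n+1}). \]

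To promote the $K$-theoretic agreement to an actual isomorphism of algebras I would verify that $C^*\sr(\Lambda_n, \gamma_n)$ satisfies the hypotheses of the classification theorem \cite[Theorem~10.2]{brs} for real Kirchberg algebras: nuclearity and separability are automatic for $k$-graph algebras; simplicity and pure infiniteness of the tensor factors (by Proposition~\ref{suspension} and \cite[Example~6.2]{boersema-MJM}) pass to the minimal tensor product; the UCT is preserved under tensor products; and since $\Lambda_n^0$ is infinite, $C^*\sr(\Lambda_n, \gamma_n)$ is non-unital and therefore stable by the real version of Zhang's dichotomy for purely infinite simple $C^*$-algebras. Classification then yields $C^*\sr(\Lambda_n, \gamma_n) \cong \K\sr \otimes\sr \E_{2n+1}$.

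For the corner statement, let $e \in \K\sr$ be a minimal projection; under the isomorphism above, the projection $p$ corresponding to $e \otimes 1_{\E_{2n+1}}$ satisfies
\[ pC^*\sr(\Lambda_n, \gamma_n)p \cong e\K\sr e \otimes\sr \E_{2n+1} \cong \R \otimes\sr \E_{2n+1} \cong \E_{2n+1}. \]
The main obstacle I anticipate is assembling the real-Kirchberg hypotheses cleanly---especially stability of the non-unital tensor product, and the UCT property of the complexification $C^*(\Lambda_n) \cong C^*(E_n) \otimes C^*(\Lambda)^{\otimes 2}$---so that \cite[Theorem~10.2]{brs} delivers a genuine isomorphism and not merely a Morita equivalence; once those technical points are secured the argument is a straightforward combination of Lemma~\ref{products}, K\"unneth, and classification.
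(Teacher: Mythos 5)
Your construction and main argument coincide with the paper's: the same product graph $E_n \times \Lambda \times \Lambda$, the same use of Lemma~\ref{products}, the same K\"unneth computation exploiting freeness of $\Sigma K\crt(\R)$ as a $\CRT$-module, and the same route through simplicity, pure infiniteness, non-unitality, Zhang's dichotomy, and \cite[Theorem~10.2]{brs} to obtain $C^*\sr(\Lambda_n,\gamma_n) \cong \K\sr \otimes\sr \E_{2n+1}$. (One small bookkeeping point: Example~6.2 of \cite{boersema-MJM} gives the $K\crr$ data, so you need \cite[Proposition~2.1]{brs} to upgrade to $K\crt$ for \emph{both} tensor factors before invoking the K\"unneth theorem, not just for $(\Lambda,\gamma)$.)

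The one place you genuinely diverge is the corner statement. The paper does not transport $e \otimes 1_{\E_{2n+1}}$ through the isomorphism; instead it invokes \cite[Proposition~3.13]{brs} to produce a projection $p$ whose class generates $KO_0(C^*\sr(\Lambda_n,\gamma_n)) \cong \Z_{2n}$, identifies $K\crr(pC^*\sr(\Lambda_n,\gamma_n)p)$ with $K\crr(\E_{2n+1})$ via fullness of the corner, checks that $[p]$ matches $[1_{\E_{2n+1}}]$, and then applies the unital uniqueness part of the classification theorem \cite[Theorem~10.2, Part~(2)]{brs}. Your argument is more direct: once the stable isomorphism is in hand, pulling back $e \otimes 1_{\E_{2n+1}}$ immediately gives $pC^*\sr(\Lambda_n,\gamma_n)p \cong e\K\sr e \otimes\sr \E_{2n+1} \cong \E_{2n+1}$, and this is perfectly valid since an isomorphism of algebras carries corners to corners. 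What the paper's route buys is independence from the particular isomorphism produced by classification (it characterizes the corner intrinsically by the $KO_0$-class of $p$); what yours buys is brevity and the avoidance of a second appeal to the classification machinery. Both are correct.
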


\begin{proof}
Let $(\Lambda, \gamma)$ be the 1-graph given by Proposition~\ref{suspension} and let $(E_n, \ve_n)$ be the finite 1-graph with involution from Example~6.2 of \cite{boersema-MJM}. Then both
$C \sp * \sr (\Lambda, \gamma)$ and
$C\sp * \sr (E_n, \ve_n)$ are simple and purely infinite and we have
\[ K\crr( C\sp * \sr (\Lambda, \gamma)) \cong \Sigma K\crr(\R)
	\quad \text{and} \quad
		K\crr( C\sp * \sr (E_n, \ve_n)) \cong \Sigma^6 K\crr( \E_n) \; . \]
This implies by \cite[Proposition~2.1]{brs} that
\[ K\crt( C\sp * \sr (\Lambda, \gamma)) \cong \Sigma K\crt(\R)
	\quad \text{and} \quad
		K\crt( C\sp * \sr (E_n, \ve_n)) \cong \Sigma^6 K\crt( \E_{2n+1}) \; . \]

Let $(\Lambda_n, \gamma_n)$ be the product rank-3 graph with involution
$ (\Lambda_n, \gamma_n) = 
	(\Lambda, \gamma)  \times (\Lambda, \gamma) 
		 \times  (E_n, \ve_n) . $
Lemma \ref{products} then implies that 
\[ C \sp * \sr (\Lambda_n, \gamma_n) 
	\cong C \sp * \sr(\Lambda, \gamma) \otimes \sr C \sp * \sr(\Lambda, \gamma)
		  \otimes\sr  C \sp * \sr(E_n, \ve_n).  \]
Now, $K\crt( C \sp * \sr (\Lambda, \gamma))$ is a free $\CRT$-module, since it is isomorphic to a suspension of $K\crt(\R)$ (see \cite[Section 2.1]{boersema2002}). Therefore the K\"unneth formula for the $K$-theory of real $C \sp*$-algebras (Proposition~3.5 and Theorem~4.2 of \cite{boersema2002}) gives
\begin{align*} K\crt( C \sp * \sr (\Lambda_n, \gamma_n))  
&\cong 
	  K\crt( C \sp * \sr(\Lambda, \gamma)) \otimes_\mathcal{CRT} K\crt( C \sp * \sr(\Lambda, \gamma))
		  \otimes_\mathcal{CRT} K\crt( C \sp * \sr(E_{2n+1}, \varepsilon_n))  \\
&\cong  \Sigma^{2}  K\crt( \R)
	\otimes_\mathcal{CRT}  \Sigma^6 K\crt(\E_{2n+1}) \\
&\cong K\crt(\E_{2n+1}) \; .
\end{align*}

As $|\Lambda_n^0| = \infty$, $C^*\sr(\Lambda_n, \gamma_n)$ cannot be unital. Being a tensor product of  simple, purely infinite, nuclear $C^*$-algebras (thanks to Proposition \ref{suspension} and \cite[Example 6.2]{boersema-MJM}), $C \sp * (\Lambda_n, \gamma_n)$ is also simple and  purely infinite (cf.~\cite[Proposition 4.1.8(iii)]{rordam-classification-monograph}).  It follows from Zhang's dichotomy \cite{zhang-purely-infinite} that $C^*(\Lambda_n, \gamma_n)$ is stable.  Consequently, $C^*\sr(\Lambda_n, \gamma_n)$ is a stable, simple, purely infinite, real $C \sp *$-algebra, because its complexification  is simple and purely infinite (see Theorem~3.9 of \cite{brs}).
By the same token, $\mathcal{K}\sr \otimes\sr \E_{2n+1}$ is a a stable, simple, purely infinite, real $C \sp *$-algebra. 
 Thus the first statement of the theorem follows by the classification of real Kirchberg algebras, \cite[Theorem 10.2, Part (1)]{brs}.

To prove the second statement, by \cite[Proposition~3.13]{brs} there is a projection $p \in C \sp * \sr(\Lambda_n, \gamma_n)$
such that $[p]$ is a generator of $KO_0(C \sp * \sr(\Lambda_n, \gamma_n) ) = \Z_{2n}$.
Then 
\[K\crr( p C \sp * \sr(\Lambda_n, \gamma_n) p) \cong K\crr( C \sp * \sr(\Lambda_n, \gamma_n)) \cong K\crr(\mathcal{E}_{2n+1}) \] 
 (where the first isomorphism is by \cite[Proposition~9]{boersema-JFA}).
Furthermore the class of the identity $[p] \in KO_0( pC \sp * \sr(\Lambda, \gamma )p ) \cong \Z_{2n}$ corresponds under this isomorphism to the class of the identity
$[1] \in KO_0( \mathcal{E}_{2n+1}) \cong \Z_{2n}$. Therefore by \cite[Theorem 10.2, Part (2)]{brs}, we have
$p C \sp * \sr(\Lambda, \gamma) p \cong \mathcal{E}_{2n+1}$.
\end{proof}


\section{Appendix -- real Kirchberg suspension algebras}
\label{sec:appendix}

In the previous section, we introduced a graph with involution $(\Lambda, \gamma)$ for which 
$K \crr( C \sp * (\Lambda, \gamma)) \cong \Sigma K\crr(\R)$. We consider this algebra as a sort of real Kirchberg suspension, since it is a real purely infinite simple stable nuclear $C \sp *$-algebra satisfying the UCT, and with the same $KK$-type as the suspension algebra 
$S \R  \cong C_0((0,1), \R) .$
By repeatedly taking the product of this graph with itself, which corresponds to repeatedly tensoring this algebra with itself, we can obtain a rank-$i$ graph, the real $C \sp *$-algebra of which is a real Kirchberg algebra with the same $KK$-type as $S^i \R$, for any $i$. 
 It is natural to ask which of these suspensions can be obtained from a 1-graph with involution. In this section, we will answer this question completely, providing a full characterization of the integers $i$ (mod 8) for which there exists a 1-graph with involution $(\Lambda, \gamma)$ such that $K\crr( C \sp * (\Lambda, \gamma)) \cong \Sigma^i K\crr( \R)
 \cong K\crr( S^i \R)$. 

\begin{proposition}
For each $ j = \{ -2, -1, 0, 1\}$ there exists a 1-graph with involution $(\Lambda, \gamma)$ such that
$K\crr( C \sp * (\Lambda, \gamma)) \cong \Sigma^i K\crr( \R)$. 
Furthermore, $C \sp * \sr (\Lambda, \gamma)$ is simple and purely infinite.
\end{proposition}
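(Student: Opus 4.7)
The case $i = 1$ is exactly the content of Proposition \ref{suspension}, so only the three cases $i \in \{-2,-1,0\}$ require new input. For each such $i$, the plan is to exhibit an explicit row-finite, source-free $1$-graph with involution $(\Lambda_i, \gamma_i)$ and to verify simultaneously that $C^*\sr(\Lambda_i,\gamma_i)$ is simple and purely infinite and that $K\crr(C^*\sr(\Lambda_i,\gamma_i)) \cong \Sigma^i K\crr(\R)$.

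\textbf{Verification template.} For each candidate graph the proof follows the same recipe as Proposition \ref{suspension}. First, check that $\Lambda_i$ has no nontrivial hereditary subsets and that every cycle admits an entrance; simplicity and pure infiniteness of $C^*(\Lambda_i)$ then follow from \cite[Theorem 12]{szyman} and \cite[Theorem 3.9]{kpr}, and these properties transfer to $C^*\sr(\Lambda_i,\gamma_i)$ by \cite[Theorem 3.9]{brs}. Second, compute $KU_*(C^*\sr(\Lambda_i,\gamma_i)) = K_*(C^*(\Lambda_i))$ using the identifications $K_0 = \coker(I - M^t)$ and $K_1 = \ker(I - M^t)$, propagated via the zero-vertex / null-vertex arguments of Proposition \ref{suspension}. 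Third, compute as much of the $E^2 = E^\infty$ page of the real Evans spectral sequence of \cite[Theorem 3.17]{boersema-gillaspy} as is needed, using the explicit descriptions of $\rho^1$ in each real $KO$-degree from Table 3 of \cite{boersema-gillaspy} and Theorem 4.4 of \cite{boersema-MJM}. Finally, propagate through the remaining real degrees via the long exact sequence \eqref{eq:LES} together with the identities $\eta^3 = 0$ and $rc = 2$; as in Proposition \ref{suspension}, the three rows computed directly from the spectral sequence, combined with $KU_*$, should force the complete identification with $\Sigma^i K\crr(\R)$.

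\textbf{Candidate graphs.} For $i = 0$, I would take a row-finite source-free graph whose complex $C^*$-algebra is $\mathcal O_\infty$ (for instance, a Drinen--Tomforde desingularization of the infinite rose), equipped with the trivial involution; the resulting real algebra is the standard real form $\mathcal O_\infty\pr$, whose $K\crr$ agrees with $K\crr(\R)$. For $i = -1$ and $i = -2$, I would modify the three-branched infinite tree from Proposition \ref{suspension}, adjusting either the partition into fixed and swapped branches or the local combinatorics near the central vertex, so as to shift the $KO$-grading by the required amount.

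\textbf{Main obstacle.} The non-routine part is designing the graphs for $i = -1$ and $i = -2$. The rank-$1$ spectral sequence has only two columns, so there is very little room to manoeuvre, and the integral and mod-$2$ incarnations of $\rho^1$ are tightly coupled: one must arrange the adjacency and involution data so that the kernels and cokernels of $\rho^1$ in rows $q = 0, 1, 2$ realize exactly the subquotient pattern of $\Sigma^i K\crr(\R)$, with the induced $\eta$ action matching the target. I expect this step to require some combinatorial experimentation on variants of the Proposition \ref{suspension} tree before a working pair $(\Lambda_i, \gamma_i)$ is identified; once found, the verification in the template above is mechanical.
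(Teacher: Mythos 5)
There is a genuine gap: the proposition is an existence statement, and for $i=-1$ and $i=-2$ you never actually produce the witnesses. Your ``verification template'' is exactly the paper's method (hereditary subsets and cycles with entrances for simplicity and pure infiniteness, zero-vertex/null-vertex propagation for $\coker(I-M^t)$ and $\ker(I-M^t)$, the rank-one real spectral sequence plus the long exact sequence \eqref{eq:LES} and $\eta^3=0$, $rc=2$), and your $i=0$ candidate is fine in substance ($\mathcal O_\infty\pr$ has $K\crr \cong K\crr(\R)$, and a row-finite source-free model with trivial involution does the job --- this is essentially the paper's $i=0$ graph). But for the two remaining cases you only say that you would ``modify the three-branched tree'' and that finding a working pair ``requires combinatorial experimentation.'' That experimentation \emph{is} the content of the proof for those cases; without exhibited graphs the argument does not establish existence.

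For comparison, the paper's constructions are quite specific and your proposed modification is aimed in a slightly wrong direction. The $i=-1$ graph in the paper carries the \emph{trivial} involution (a single chain of loop-vertices, not a re-partitioned three-branched tree), and the $i=-2$ graph is obtained by reversing all the edge orientations of the Proposition~\ref{suspension} graph while keeping the same branch-swapping involution. Edge reversal transposes the adjacency matrix, which interchanges the roles of $\ker$ and $\coker$ of $I-M^t$, i.e.\ swaps the $p=0$ and $p=1$ columns of the two-column spectral sequence; this is the mechanism that converts the degree shift $\Sigma^{1}$ into $\Sigma^{-2}$, rather than a change in which branches are fixed versus swapped. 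There is also one extra wrinkle your template glosses over: for the $i=-2$ graph the spectral sequence leaves an unresolved extension $0 \to \Z \to KO_2(C^*\sr(\Lambda,\gamma)) \to \Z_2 \to 0$, which must be settled (to $KO_2 = \Z$) using the long exact sequence \eqref{eq:LES}; so the claim that the verification is ``mechanical'' once the graph is found needs this caveat.
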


\begin{proof}[Sketch of proof.]
For $i = 1$, this follows from Proposition~\ref{suspension}.
For each $i \in \{-2, -1, 0\}$ we show below a graph or graph with involution that satisfies
$K\crr( C \sp * (\Lambda, \gamma)) \cong \Sigma^i K\crr( \R)$.
The $K$-theory calculations, not shown, are carried out using the same techniques as in the proof of Proposition \ref{suspension}.

{\underline{$i = 0$}}. ~~ The graph $\Lambda$ is shown below, with trivial involution $\gamma = \id$.

\begin{equation*}
\resizebox{4in}{!}{
\xymatrix{ 
& \bullet \ar[dl] \ar@(ur,ul) \ar@{<-}[dr]
&& \bullet \ar[dl] \ar@(ur,ul) \ar@{<-}[dr]
&& \bullet \ar[dl] \ar@(ur,ul) \ar@{<-}[dr]
&& \bullet \ar[dl] \ar@(ur,ul) \ar@{<-}[dr]
&& \\
  {\bullet}   \ar[rr]
&& {\bullet}  \ar[rr] 
&& {\bullet} \ar[rr]   
&& {\bullet} \ar[rr]  
&& {\bullet}    \ar@{.>}[rr] \ar@{<.}[ur]
&&\\
} \; }
\end{equation*}
\vspace{.25cm}

{\underline{$i = -1$}.} ~~ The graph $\Lambda$ is shown below, with trivial involution $\gamma = \id$.

\begin{equation*}
\resizebox{4in}{!}{
\xymatrix{ 
& \bullet \ar[dr] \ar@(ur,ul)
&& \bullet \ar[dr] \ar@(ur,ul)
&& \bullet \ar[dr] \ar@(ur,ul)
&& \bullet \ar[dr] \ar@(ur,ul)
&& \\
  {\bullet}  \ar[ur]
&& {\bullet}  \ar[ll] \ar[ur] 
&& {\bullet} \ar[ll]  \ar[ur] 
&& {\bullet} \ar[ll]  \ar[ur]
&& {\bullet}  \ar[ll]   \ar@{<.}[rr] \ar@{.>}[ur]
&&\\
} \; }
\end{equation*}
\vspace{.25cm}

{\underline{$i = -2.$}} The graph $\Lambda$ is shown below, with the  non-trivial involution $\gamma$ which interchanges the right-hand branches.

\[
\resizebox{5in}{!}{
\xymatrix{ 
        &&&&&&&&
        &&  \bullet \ar@{<-}[dr] \ar[dl] \ar@(ur,ul) 
        &&  \bullet \ar@{<-}[dr] \ar[dl] \ar@(ur,ul) 
        &&  \bullet \ar@{<-}[dr] \ar[dl] \ar@(ur,ul) 
        && 
        \\
        &
        &&  \bullet  \ar@(ur,ul)  \ar[dl] \ar@{<-} [dr]
        && \bullet  \ar@(ur,ul)   \ar[dl] \ar@{<-} [dr]
        && \bullet  \ar@(ur,ul)   \ar[dl] \ar@{<-} [dr]
        &&    {\bullet}   \ar@{<->}[dd] \ar[rr]
        && {\bullet} \ar[rr]
        && {\bullet} \ar[rr] 
        && {\bullet} \ar@{.>}[rr]  \ar@{<.}[ur]
        &&    \\
         \ar@{.>}[rr] 
        && \bullet  \ar[rr] \ar@{.>}[ul] 
        && \bullet \ar[rr] 
        && \bullet \ar[rr] 
        && \bullet \ar@{<->}[ur] \ar@{<->}[dr]  
        \\
        &&&&&&&&&  { \bullet} \ar[rr]
        && {\bullet} \ar[rr]
         && {\bullet} \ar[rr]
        && {\bullet} \ar@{.>}[rr]   \ar@{<.}[dr] 
        && 
         \\
        &&&&&&&&
        &&   \bullet \ar@{<-}[ur]  \ar[ul] \ar@(dr,dl) 
        && \bullet \ar@{<-}[ur]  \ar[ul] \ar@(dr,dl) 
        && \bullet \ar@{<-}[ur]  \ar[ul] \ar@(dr,dl) 
        && 
        } \; }
     \]
    \vspace{.25cm}


    
For the $i= -2$ graph, one can determine all of the groups $KO_i(C^*\sr(\Lambda, \gamma))$ from the associated spectral sequence, except for $KO_2(C^*\sr(\Lambda, \gamma))$.   In that case, the spectral sequence $KO_2( C \sp * \sr (\Lambda, \gamma))$ has the filtration 
$0 \rightarrow \Z \rightarrow KO_2( C \sp *  \sr(\Lambda, \gamma)) \rightarrow \Z_2 \rightarrow 0$. Although this filtration by itself does not determine $KO_2(C^*\sr(\Lambda, \gamma))$, the long exact sequence \eqref{eq:LES} forces $KO_2( C \sp * \sr (\Lambda, \gamma)) = \Z$. 
\end{proof}

\begin{proposition}
For $2 \leq i \leq 5$, there does not exist a 1-graph $(\Lambda, \gamma)$ with involution such that $K\crr(C \sp * \sr (\Lambda, \gamma)) \cong \Sigma^i K\crr(\R)$.
\label{prop:last}
\end{proposition}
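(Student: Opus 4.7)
The plan is to extract from the rank-1 Evans spectral sequence an obstruction that is \emph{independent} of the graph with involution $(\Lambda, \gamma)$: namely, that $KO_5(C^*\sr(\Lambda,\gamma))$ and $KO_7(C^*\sr(\Lambda,\gamma))$ are always torsion-free. Once this is in hand, I will observe that $\Sigma^i K\crr(\R)$ carries $\Z_2$ torsion in one of these two degrees whenever $i \in \{2,3,4,5\}$, producing the desired contradiction.

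First, recall from Section \ref{sec:k-graph-K-thy} that the real part of the spectral sequence of a rank-1 graph with involution has $E^2$ page given by the homology of
\[ 0 \longrightarrow \mathcal A \xrightarrow{\ \partial_1\ } \mathcal A \longrightarrow 0, \]
where $\mathcal A_q \cong KO_q(\R)^{\Lambda^0_f} \oplus KO_q(\C)^{\Lambda^0_g}$. From the $K\crr(\R)$ and $K\crr(\C)$ tables reproduced in Section \ref{sec:k-graph-K-thy}, one reads off $\mathcal A_3 = \mathcal A_5 = \mathcal A_7 = 0$, while $\mathcal A_4 = \Z^{\Lambda^0_f} \oplus \Z^{\Lambda^0_g}$ and $\mathcal A_6 = \Z^{\Lambda^0_g}$ are free abelian. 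Because this spectral sequence has only the two columns $p=0,1$, every differential $d^r$ with $r \geq 2$ vanishes for bidegree reasons, and $E^2 = E^\infty$. The spectral sequence filtration then gives short exact sequences
\[ 0 \to E^\infty_{0,j} \to KO_j(C^*\sr(\Lambda, \gamma)) \to E^\infty_{1,j-1} \to 0, \]
and for $j \in \{5,7\}$ the left-hand term vanishes since $\mathcal A_j = 0$. Hence $KO_5(C^*\sr(\Lambda, \gamma)) \cong \ker(\partial_1|_{\mathcal A_4}) \subseteq \mathcal A_4$ and $KO_7(C^*\sr(\Lambda, \gamma)) \cong \ker(\partial_1|_{\mathcal A_6}) \subseteq \mathcal A_6$, both subgroups of free abelian groups, hence both torsion-free.

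Second, I check that for each $i \in \{2,3,4,5\}$ the graded group $\Sigma^i K\crr(\R)$ contains $\Z_2$ in degree $5$ or $7$. Using $(\Sigma^i K\crr(\R))_j = KO_{i+j}(\R)$ and $8$-periodicity of $KO_*(\R)$: for $i \in \{2,3\}$ the degree-$7$ component is $KO_{i-1}(\R) = \Z_2$, and for $i \in \{4,5\}$ the degree-$5$ component is $KO_{i-3}(\R) = \Z_2$. In every case, the target contains $\Z_2$ in a degree where $KO_*(C^*\sr(\Lambda, \gamma))$ is forced to be torsion-free by the first step, yielding the contradiction.

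The only point that might give pause is the degeneration $E^2 = E^\infty$, but this is immediate from the two-column shape of the rank-1 spectral sequence, so there is no room for a nontrivial $d^r$ with $r \geq 2$. One might also worry about extension problems in the filtration of $KO_5$ and $KO_7$, but these are obviated by the vanishing of $\mathcal A_5$ and $\mathcal A_7$, which collapses each of the two-term filtrations to a single kernel group. Beyond these structural observations the argument is a bookkeeping exercise with the $K\crr(\R)$ and $K\crr(\C)$ tables.
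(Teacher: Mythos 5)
Your proof is correct and follows essentially the same route as the paper: both arguments use the two-column rank-1 Evans spectral sequence to conclude that $KO_5$ and $KO_7$ (equivalently $KO_{-3}$ and $KO_{-1}$) of any $C^*\sr(\Lambda,\gamma)$ are torsion-free, and then observe that $\Sigma^i K\crr(\R)$ has $\Z_2$ in one of those degrees for each $i\in\{2,3,4,5\}$. The only difference is that you spell out the degeneration $E^2=E^\infty$ and the collapse of the filtration, which the paper compresses into a one-line appeal to the real Pimsner--Voiculescu sequence.
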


\begin{proof}
Suppose that $(\Lambda, \gamma)$ is a graph with involution and $K\crr(C \sp * \sr (\Lambda, \gamma)) \cong \Sigma^i K\crr(\R)$. 

The real Pimsner-Voiculescu sequence (or equivalently, the real Evans spectral sequence) for $K\crr( C \sp * (\Lambda, \gamma))$ implies that $KO_{-1}( C \sp * \sr (\Lambda, \gamma))$ and $KO_{-3} (C \sp * \sr( \Lambda, \gamma))$ are free abelian groups. However, $KO_1(\R) \cong KO_2(\R) \cong \Z_2$. Thus the group $(\Sigma^{2} KO(\R))_{-1} = KO_1(\R)$ has torsion, implying that 
$K\crr( C \sp * \sr(\Lambda, \gamma)) \ncong \Sigma^{2} KO\crr(\R)$, hence $i \neq 2$. Similarly, the groups $(\Sigma^3 KO (\R))_{-1}$, $(\Sigma^4 KO(\R))_{-3} $, and $(\Sigma^5 KO (\R))_{-3} $ have torsion,  showing that $i \neq 3,4,5$.
\end{proof}

\bibliographystyle{amsalpha}
\bibliography{exotic}
\end{document}